\tikzset{join/.code=\tikzset{after node path={%
\ifx\tikzchainprevious\pgfutil@empty\else(\tikzchainprevious)%
edge[every join]#1(\tikzchaincurrent)\fi}}}
\tikzset{>=stealth',every on chain/.append style={join},
         every join/.style={->}}
\tikzset{
    %Define standard arrow tip
    >=stealth',
    %Define style for boxes
    punkt/.style={
           rectangle,
           rounded corners,
           draw=black, very thick,
           text width=6.5em,
           minimum height=2em,
           text centered},
    % Define arrow style
    pil/.style={
           ->,
           thick,
           shorten <=2pt,
           shorten >=2pt,}
}
\newtheorem{thm}{Theorem}[section]
\newtheorem{prop}[thm]{Proposition}
\newtheorem{lem}[thm]{Lemma}
\newtheorem{cor}[thm]{Corollary}
\theoremstyle{definition}
\newtheorem{example}[thm]{Example}
\newtheorem{remark}[thm]{Remark}
\newtheorem{definition}[thm]{Definition}
\font\black=cmbx10 \font\sblack=cmbx7 \font\ssblack=cmbx5 \font\blackital=cmmib10  \skewchar\blackital='177
\font\sblackital=cmmib7 \skewchar\sblackital='177 \font\ssblackital=cmmib5 \skewchar\ssblackital='177
\font\sanss=cmss12 \font\ssanss=cmss8 scaled 900 \font\sssanss=cmss8 scaled 600 \font\blackboard=msbm10
\font\sblackboard=msbm7 \font\ssblackboard=msbm5 \font\caligr=eusm10 \font\scaligr=eusm7 \font\sscaligr=eusm5
\font\bsymb=cmsy10 scaled\magstep2
\def\all#1{\setbox0=\hbox{\lower1.5pt\hbox{\bsymb
       \char"38}}\setbox1=\hbox{$_{#1}$} \box0\lower2pt\box1\;}
\def\exi#1{\setbox0=\hbox{\lower1.5pt\hbox{\bsymb \char"39}}
       \setbox1=\hbox{$_{#1}$} \box0\lower2pt\box1\;}
\def\sss#1{{\fam\ssfam\relax#1}}
\def\pmb#1{\setbox0\hbox{${#1}$} \copy0 \kern-\wd0 \kern.2pt \box0}
\def\pmbb#1{\setbox0\hbox{${#1}$} \copy0 \kern-\wd0
      \kern.2pt \copy0 \kern-\wd0 \kern.2pt \box0}
\def\pmbbb#1{\setbox0\hbox{${#1}$} \copy0 \kern-\wd0
      \kern.2pt \copy0 \kern-\wd0 \kern.2pt
    \copy0 \kern-\wd0 \kern.2pt \box0}
\def\pmxb#1{\setbox0\hbox{${#1}$} \copy0 \kern-\wd0
      \kern.2pt \copy0 \kern-\wd0 \kern.2pt
      \copy0 \kern-\wd0 \kern.2pt \copy0 \kern-\wd0 \kern.2pt \box0}
\def\pmxbb#1{\setbox0\hbox{${#1}$} \copy0 \kern-\wd0 \kern.2pt
      \copy0 \kern-\wd0 \kern.2pt
      \copy0 \kern-\wd0 \kern.2pt \copy0 \kern-\wd0 \kern.2pt
      \copy0 \kern-\wd0 \kern.2pt \box0}
\mathchardef\za="710B  %\alpha
\mathchardef\zb="710C  %\beta
\mathchardef\zg="710D  %\gamma
\mathchardef\zd="710E  %\delta
\mathchardef\zve="710F %\epsilon
\mathchardef\zz="7110  %\zeta
\mathchardef\zh="7111  %\eta
\mathchardef\zvy="7112 %\theta
\mathchardef\zi="7113  %\iota
\mathchardef\zk="7114  %\kappa
\mathchardef\zl="7115  %\lambda
\mathchardef\zm="7116  %\mu
\mathchardef\zn="7117  %\nu
\mathchardef\zx="7118  %\xi
\mathchardef\zp="7119  %\pi
\mathchardef\zr="711A  %\rho
\mathchardef\zs="711B  %\sigma
\mathchardef\zt="711C  %\tau
\mathchardef\zu="711D  %\upsilon
\mathchardef\zvf="711E %\phi
\mathchardef\zq="711F  %\chi
\mathchardef\zc="7120  %\psi
\mathchardef\zw="7121  %\omega
\mathchardef\ze="7122  %\varepsilon
\mathchardef\zy="7123  %\vartheta
\mathchardef\zf="7124  %\varomega
\mathchardef\zvr="7125 %\varrho
\mathchardef\zvs="7126 %\varsigma
\mathchardef\zf="7127  %\varphi
\mathchardef\zG="7000  %\Gamma
\mathchardef\zD="7001  %\Delta
\mathchardef\zY="7002  %\Theta
\mathchardef\zL="7003  %\Lambda
\mathchardef\zX="7004  %\Xi
\mathchardef\zP="7005  %\Pi
\mathchardef\zS="7006  %\Sigma
\mathchardef\zU="7007  %\Upsilon
\mathchardef\zF="7008  %\Phi
\mathchardef\zW="700A  %\Omega
\newcommand{\be}{\begin{equation}}
\newcommand{\ee}{\end{equation}}
\newcommand{\raa}{\rightarrow}
\newcommand{\bea}{\begin{eqnarray}}
\newcommand{\eea}{\end{eqnarray}}
\newcommand{\beas}{\begin{eqnarray*}}
\newcommand{\eeas}{\end{eqnarray*}}
\def\*{{\textstyle *}}
\newcommand{\T}{{\mathbb T}}
\newcommand{\w}{\wedge}
\newcommand{\nn}{\nonumber}
\newcommand{\ti}{\times}
\def\ran{\rangle}
\def\cA{{\cal A}}
\def\cC{{\cal C}}
\def\cN{{\cal N}}
\def\cJ{{\cal J}}
\def\cO{{\cal O}}
\def\cU{{\cal U}}
\def\sT{{\sss T}}
\def\cM{{\cal M}}
\newcommand{\mZ}{\mathbb{Z}_2}
\newcommand{\p}{\partial}
\newcommand{\la}{\langle}
\newcommand{\Ci}{C^{\infty}}
\newcommand{\N}{\mathbb{N}}
\newcommand{\Z}{\mathbb{Z}}
\newcommand{\R}{\mathbb{R}}
\newcommand{\ao}{\mathbf{1}^n}
\newcommand{\lp}{\left(}
\newcommand{\rp}{\right)}
\newcommand{\op}[1]{\!\!\mathop{\rm ~#1}\nolimits}
\newcommand\dirac{/\kern-1.2ex\partial}
\newcommand\Dc{/\kern-1.5ex D}
\begin{document}
\title{\bf The category of $\mathbb{Z}_2^n$-supermanifolds}
\date{}
\author{Tiffany Covolo\footnote{National Research University Higher School of Economics, Moscow, tcovolo@hse.ru}, Janusz Grabowski\footnote{Polish Academy of Sciences, Warsaw, jagrab@impan.pl}, and Norbert Poncin\footnote{University of Luxembourg, norbert.poncin@uni.lu}}

\maketitle

\begin{abstract}{In Physics and in Mathematics $\Z_2^n$-gradings, $n\ge 2$, appear in various fields. The corresponding sign rule is determined by the `scalar product' of the involved $\Z_2^n$-degrees.
The $\Z_2^n$-Supergeometry exhibits challenging differences with the classical one: nonzero degree even coordinates are not nilpotent, and even (resp., odd) coordinates do not necessarily commute (resp., anticommute) pairwise.
In this article we develop the foundations of the theory: we define $\Z_2^n$-supermanifolds and provide examples in the ringed space and coordinate settings. We thus show that formal series are the appropriate substitute for nilpotency. Moreover, the class of $\Z_2^\bullet$-supermanifolds is closed with respect to the tangent and cotangent functors. We explain that any $n$-fold vector bundle has a canonical `superization' to a $\Z_2^n$-supermanifold and prove that the fundamental theorem describing supermorphisms in terms of coordinates can be extended to the $\Z_2^n$-context.}
\end{abstract}

\vspace{2mm} \noindent {\bf MSC 2010}: 17A70, 58A50, 13F25, 16L30 \medskip

\noindent{\bf Keywords}: Supersymmetry, supergeometry, superalgebra, higher grading, sign rule, ringed space, manifold, morphism, tangent space, higher vector bundle

\thispagestyle{empty}
%\tableofcontents

\section{Introduction}

{\bf Motivation and discussion}. Classical Supersymmetry and Supergeometry are not sufficient to suit the current needs. In {Physics}, $\Z_2^n$-gradings, $n\ge 2,$ are used in string theory and in parastatistics \cite{AFT10}, \cite{YJ01}. In {Mathematics}, there exist good examples of $\Z_2^n$-graded \emph{$\Z_2^n$-commutative algebras} (i.e., the superscript of $-1$ in the sign rule is the standard `scalar product' of $\Z_2^n$): quaternions and, more generally, any Clifford algebra, the algebra of Deligne differential superforms, etc. Moreover, there exist interesting and canonical examples of \emph{$\Z^n_2$-supermanifolds}.

For instance, note that the tangent bundle of a classical $\Z_2$-supermanifold $\cM$ can be viewed as a $\Z_2$-supermanifold in two different ways, as $\sT M$ or $\sT[1]\cM$, and as a $\Z_2^2$-supermanifold $\T\cM$, in which case the function sheaf is the completion of differential superforms of $\cM$ together %with the Bernstein-Leites (resp.,
with the Deligne sign convention. Actually the tangent (and cotangent) bundle of any $\Z_2^n$-supermanifold is a $\Z_2^{n+1}$-supermanifold. Further, any double vector bundle (resp., any \emph{$n$-fold vector bundle}) canonically provides a $\Z_2^2$-supermanifold (resp., $\Z_2^n$-supermanifold) as its `superization'.

To be more precise, suppose that $\cM$ is a supermanifold with local coordinates $(x^1,\dots,x^p,$ $\zx^1,\dots,\zx^q)$, where the $x^i$ are even and the $\zx^a$ odd. For the tangent bundle $\sT\cM$, with the adapted local coordinates $(x^i,\zx^a,\dot x^j,\dot\zx^b)$, one can introduce a supermanifold structure declaring $\dot x^j$ to be even and $\dot\zx^b$ to be odd, or reversing these parities for $\sT[1]\cM$.
For the latter, the variables $\dot\zx^b$ are even; they are true real-valued variables although the $\zx^b$ are indeterminates or formal variables.
Hence, the local model of the function algebra of the corresponding supermanifold $\sT[1]\cM$ is given by the polynomials in the odd indeterminates $\zx^a$ and $\dot x^j$ with coefficients in the smooth functions with respect to the even variables $x^i$ and $\dot \zx^b$. These polynomials are referred to as pseudodifferential forms. Pseudodifferential forms can be defined not only locally on superdomains but also globally on supermanifolds \cite{Lei11}. They have been introduced in \cite{BL77} to obtain objects suitable for integration. The algebra $\widehat{\zW}(\cM)$ of pseudodifferential forms on $\cM$ is exactly the function algebra of the supermanifold $\sT[1]\cM$.

On the other hand, the tangent bundle, as every vector bundle, admits an $\N$-grading for which $\dot x^j$ and $\dot\zx^b$ are of degree 1 (and $x^i$ and $\zx^a$ are of degree 0). Thus we have a canonical bigrading by the monoid $\N\times \mZ$, which can be reduced to $\mZ^2=\mZ\ti\mZ$. With respect to this bigrading, $(x^i,\zx^a,\dot x^j,\dot\zx^b)$ are of bidegrees $(0,0),(0,1),(1,0)$, and $(1,1)$, respectively. In particular, the variables $\dot\zx^b$ are even but have nonzero degree, so that, in accordance with a common feeling, we cannot claim that they are real variables \cite[Section 3.3.2]{Wi12}.

Now, any symmetric bi-additive map $\la -,- \ran:\mZ^2\ti\mZ^2\to\Z$ gives rise to a sign rule: $$AB=(-1)^{\la(m,n),(k,l)\ran}BA\;,$$ where $A$ and $B$ are coordinates of bidegrees $(m,n)$ and $(k,l)$, respectively. We get the sign rule for nonreversed parity when choosing $\la(m,n),(k,l)\ran=n\,l$, obtain the \emph{Bernstein-Leites sign rule} (the sign rule for reversed parity used in $\sT[1]\cM$) when setting
$\la(m,n),(k,l)\ran=(m+n)(k+l)$, and we find the \emph{Deligne sign rule} for the `scalar product' $\la(m,n),(k,l)\ran=mk+nl$; see discussion in \cite[Appendix to \S 1]{DM99}. Note that the latter does not lead to a {\em superalgebra}, as the $\dot\zx^b$ are even in the sense that they commute among themselves, but anticommute with the $\zx^a$. To include Deligne's convention into the picture which, as just mentioned, does not correspond to any \emph{supermanifold}, it is natural to extend the notion of supermanifold and to admit $\Z_2^2$-gradings, or even $\Z_2^n$-gradings with $n\ge 2$, as well as the corresponding sign rule
\be\label{Z_2^n-com}AB=(-1)^{\sum_{i=1}^n m_ik_i}BA\;,
\ee
where $(m_1,\ldots,m_n)$ and $(k_1,\ldots,k_n)$ are the $\Z_2^n$-degrees of $A$ and $B$. The above additional canonical grading, together with Deligne's convention, makes the tangent (as well as the cotangent) bundle into a $\Z_2^2$-supermanifold $\T\cM$ (resp. $\T^\star\cM$), so that the class of $\Z_2^\bullet$-supermanifolds is closed with respect to the tangent (resp., cotangent) functor.

Thus we get the hierarchy: $\,\Z_2^0$-Supergeometry (classical even Differential Geometry) contains the germ of $\Z_2^1$-Supergeometry (standard Supergeometry), which in turn contains the sprout of $\Z_2^2$-Supergeometry, etc. Moreover, the $\Z_2^n$-supergeometric viewpoint provides deeper insight and simplified solutions, just as the classical supergeometric approach does in comparison with ordinary differential Geometry. Interesting relations with Quantum Mechanics, Quantum Field Theory and String Theory are expected.
\medskip

Although not universally accepted at the beginning, $\Z_2^n$-Supergeometry is, in view of what has been said, a {\bf necessary and natural} generalization. When defining the parity of a $\Z_2^n$-degree as the parity of the total degree, nonzero degree even coordinates are not nilpotent, and even (resp., odd) coordinates do not necessarily commute (resp., anticommute) pairwise. These circumstances lead to interesting differences with the classical theory.

The reason for initial skepticism was Neklyudova's equivalence \cite{Lei11}: this result states that the categories of $\Z_2^n$-graded {\it $\Z_2^n$-commutative} and $\Z^n_2$-graded {\it supercommutative} algebras are equivalent. However, working with $\Z_2^n$-supermanifolds means that we are working with very particular $\Z_2^n$-superalgebras, describing geometrical objects really different from standard supermanifolds, and various geometrical constructions, like passing from $\cM$ to $\T\cM$, require that we should be able to consider $\Z_2^n$-supermanifolds for different $n$ at the same footing. Moreover,
it turned out that the construction of `{\em $\Z_2^n$-commutative} concepts' (e.g., the $\Z_2^n$-Berezinian \cite{COP12}) via simple pullbacks of the corresponding `{\it supercommutative} concepts' (the classical Berezinian) is not always as easy as one might expect. Therefore, Neklyudova's theorem {\bf does not ban} investigation of $\Z_2^n$-supermanifolds. On the other hand, we will show that $\Z_2^n$-grading is {\bf sufficient} in the sense that any sign rule, for any finite number $N$ of coordinates, is of the form (\ref{Z_2^n-com}), for some $n\le 2N$.\medskip

\noindent{\bf Possible applications}. The theory of $\Z_2^n$-supermanifolds is closely related to Clifford calculus, see above. Clifford algebras have numerous applications in Physics, but the use of $\Z_2^n$-gradings has never been studied. For instance, Clifford algebras and modules are crucial in understanding Spin-structures on manifolds together with their physical consequences, e.g., the Dirac operator. While the $\Z_2^n$-refined viewpoint led to new results on Clifford algebras and modules over them [COP12], its impact on standard applications in Geometry and Field Theory has still to be explained.

Further, the theory of $\Z_2^n$-manifolds should lead to a novel approach to quaternionic functions: examples of application areas include thermodynamics, hydrodynamics, geophysics and structural mechanics.

It is eventually interesting to observe the parallelism of our $\Z_2^n$-extension with Baez' suggestion of a common generalization -- under the name of $r$-Geometry -- of superalgebras and Clifford algebras with the goal to incorporate, besides bosons and fermions, also anyons into the picture \cite{Bae92}.\medskip

\noindent{\bf Main difference with standard Supergeometry}. The key-concept of $\Z_2^n$-Superalgebra is the (already mentioned) $\Z_2^n$-Berezinian. This higher Berezinian (which is tightly connected with quasi-determinants) and the corresponding (via the Liouville formula) higher trace have been constructed \cite{COP12}. It provides a new solution (`different' from the Dieudonn\'e determinant) to Cayley's challenge to build a determinant of quaternionic matrices.

Hence, the new $\Z_2^n$-Supergeometry not only includes differential but possibly also integral calculus, in contrast with \cite{Mol10} which develops a functorial concept of $\Z_2^n$-supermanifold and mentions explicitly a lack of insight as concerns integration. However, whereas parts of the differential calculus on $\Z_2^n$-supermanifolds are `just' extensions (sometimes nice ones) of the classical theory, integral calculus on $\Z_2^n$-supermanifolds must be completely different from the classical case \cite{Pon16}.\medskip

\noindent{\bf Sources}. Of course, there is an extensive literature on Supergeometry and related topics and it is impossible to give complete references; let us at least mention \cite{BBHR91}, \cite{Ber79}, \cite{Ber87}, \cite{DeW84}, \cite{Kos75}, \cite{Rog07}, \cite{Tuy04}. The sources that had the highest impact on the present text are: \cite{DAL}, \cite{Rog07}, \cite{Tuy04}, \cite{Lei11}, \cite{Var}, \cite{Man}, \cite{DM99}, \cite{CCF}, \cite{DSB}, \cite{Vor12}, \cite{BP12}, \cite{GKP1}, \cite{GKP2}, \cite{Schm97}.
%tu
\section{Sign rules}

In standard Supergeometry the sign rule is completely determined by the parity. Why not accept an arbitrary commutation rule? In principle, one can consider a general grading by a semigroup and an arbitrary commutation factor, i.e., work with so-called \emph{colored algebras}.\medskip

More precisely, let $K$ be a commutative unital ring, $K^\times$ be the group of invertible elements
of $K$, and let $G$ be a commutative semigroup. A map $\zf: G \ti G \to K^\times$ is called a \emph{commutation factor} on $G$ if
\be\label{0}\zf(g,h)\zf(h,g) = 1\,,\quad\zf(f,g+h)=\zf(f,g)\zf(f,h)\,,\quad \text{and}\quad \zf(g,g) = \pm 1\,,\ee
for all $f, g, h\in G$.
Note that these axioms imply that
$$\zf(f+g,h)=\zf(f,h)\zf(g,h)$$
and that the condition $\zf(g,g) = \pm 1$ follows automatically from the other two axioms if $K$ is a field.\medskip

Let $\cA$ be a $G$-graded $K$-algebra $\cA=\bigoplus_{g\in G}\cA^g$. Elements $x$ from $\cA^g$ are called \emph{$G$-homogeneous
of degree} or \emph{weight} $g=:\op{deg}(x)$. The algebra $\cA$ is said to be \emph{$\zf$-commutative} if
\be a b = \zf(\op{deg}(a),\op{deg}(b))b a\;,\ee
for all $G$-homogeneous elements $a, b \in \cA$. Homogeneous elements $x$ with $p(\op{deg}(x))=p(g):=\zf(g,g)=-1$
are \emph{odd}, the other homogeneous elements are \emph{even}.
Graded algebras with commutation rules of this kind are known under the name \emph{color algebras} \cite{KS12,Lych95,Sche79}. In this paper we will be interested in color associative and color Lie algebras whose commutation factor is just a sign.\medskip

In what follows, $K$ will be $\R$ and $\zf$ will take the form $$\zf(g,h)=(-1)^{\la g, h\ran}\;,$$ for
a `scalar product' $\la -,- \ran:G\ti G\to\Z$. This means that we use the {\it commutation factor} as the \emph{sign rule}. In this note we confine ourselves to $G=\mZ^n$ and the standard `scalar product' of $\Z_2^n$, what will lead to $\Z_2^n$-Supergeometry with nicer categorical properties than the standard Supergeometry. More precisely, we propose a generalization of differential $\mZ^1$-Supergeometry to the case of a $\mZ^n$-grading in the structure sheaf.\medskip

Indeed, we will show that any sign rule, for any finite number of coordinates, can be obtained from the `scalar product'
\be\label{sp}\la(i_1,\dots,i_n),(j_1,\dots,j_n)\ran_n=i_1j_1+\ldots +i_nj_n
\ee on $\mZ^n$ for a sufficiently big $n$. In other words, any algebra that is finitely generated by some generators satisfying certain sign rules can be viewed as a $\mZ^n$-graded associative algebra $\cA=\bigoplus_{i\in\mZ^n}\cA^i$, which is $\Z_2^n$-commutative in the sense that
$$y^iy^j=(-1)^{\la i,j\ran_n}y^jy^i\;,$$
for all $y^i\in\cA^i$, $y^j\in\cA^j$. We simply refer to such algebras as {$\mZ^n$-\emph{commutative associative algebras}}. Let us mention that a similar theorem was proved for group gradings in \cite{MGO10}.\medskip

Let now $S$ be a finite set, say $S=\{ 1,\dots,m\}$, and let $\zf:S\ti S\to\{\pm 1\}$ be any symmetric function. We can understand $\zf$ as a sign rule for an associative algebra generated by elements $y^i$, $i=1,\dots, m$, i.e.,
$$y^iy^j=\zf(i,j)y^jy^i\,.$$
We then have the following.
\begin{thm}\label{SuffGrad} There is $n\le 2m$ and a map $\zs:S\to\mZ^n$, $i\mapsto\zs_i$, such that \be\label{sr}\zf(i,j)=(-1)^{\la\zs_i,\zs_j\ran_n}\;.
\ee
\end{thm}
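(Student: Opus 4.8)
The plan is to recast the statement as a factorization problem for symmetric matrices over the two–element field $\mZ=\F_2$, and to solve it by the structure theory of symmetric bilinear forms in characteristic $2$.

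First I would pass to exponents. Writing $\zf(i,j)=(-1)^{\zvf(i,j)}$ defines a symmetric map $\zvf\colon S\ti S\to\mZ$, and assembling its values gives a symmetric matrix $\zF=(\zvf(i,j))_{i,j=1}^m$ over $\mZ$. If we look for vectors $\zs_i\in\mZ^n$ and stack them as the rows of an $m\ti n$ matrix $\zS$ over $\mZ$, then $\la\zs_i,\zs_j\ran_n\equiv\sum_{k=1}^n(\zs_i)_k(\zs_j)_k=(\zS\zS^{\top})_{ij}\pmod 2$. Hence \eqref{sr} holds for all $i,j$ (the diagonal included, which is exactly where the self–commutation signs $\zf(i,i)$ are encoded) precisely when $\zS\zS^{\top}=\zF$ over $\mZ$. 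So the theorem is equivalent to the purely linear statement: \emph{every symmetric matrix over $\mZ$ factors as $\zS\zS^{\top}$ with at most $2m$ columns.}

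To produce such a factorization I would invoke the classification of symmetric bilinear forms over $\mZ$: by splitting off the radical and then iterating, any such form is congruent to an orthogonal direct sum of one–dimensional blocks $[1]$, hyperbolic planes $H=\left(\begin{smallmatrix}0&1\\1&0\end{smallmatrix}\right)$, and zero blocks $[0]$. Concretely, there is an invertible $P$ over $\mZ$ with $\zF=PCP^{\top}$, where $C$ is the direct sum of $r_1$ blocks $[1]$, $r_2$ blocks $H$, and $s$ zeros, so that $r_1+2r_2+s=m$. I then factor $C$ blockwise: $[1]=[1][1]^{\top}$ uses one column, a zero block uses none, and a hyperbolic plane is realized by the three columns $\zs_1=(1,1,0)$, $\zs_2=(1,0,1)$, which have even weight (so zero diagonal) and odd overlap (so $\la\zs_1,\zs_2\ran=1$). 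Assembling these gives $C=\zS_C\zS_C^{\top}$ with $\zS_C$ of size $m\ti(r_1+3r_2)$, whence $\zF=(P\zS_C)(P\zS_C)^{\top}$ and one takes $\zS=P\zS_C$. The number of coordinates is $n=r_1+3r_2=(r_1+2r_2)+r_2\le m+r_2\le 2m$, which is the asserted bound.

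The main obstacle, and the reason the bound is $2m$ rather than $m$, is the characteristic–$2$ phenomenon that the standard form is not diagonalizable on alternating subspaces: a hyperbolic plane $H$ has zero diagonal yet is nondegenerate, so it cannot be written as $\zS\zS^{\top}$ with only two columns (any two even–weight vectors in $\mZ^2$ are orthogonal), and it genuinely requires an extra coordinate. Controlling how many hyperbolic planes occur is exactly what the congruence normal form does; everything else (the passage to exponents, the blockwise factorization, and the pullback by $P$) is routine linear algebra over $\mZ$. As an alternative to quoting the classification, one can run the same argument as an induction on $m$, peeling off a $[1]$ when some $\zvf(i,i)=1$, a hyperbolic plane when the diagonal vanishes but some off–diagonal entry equals $1$, and a zero generator otherwise.
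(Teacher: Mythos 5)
Your argument is correct, and it reaches the theorem by a genuinely different route than the paper does. The paper gives a direct, self-contained greedy construction: it realizes $\mZ^{2m}$ as the functions on $\{\pm 1,\dots,\pm m\}$, reserves for each generator $j$ the private pair of slots $\pm j$, and fixes the entries inductively --- the value of the later vectors at $+j$ is chosen to enforce the off-diagonal signs $\zf(i,j)$, while the value of $\zs_j$ at $-j$ (where all other $\zs_i$ vanish) is a free correction term used to hit the diagonal sign $\zf(j,j)$ without disturbing the relations already secured; no structure theory is invoked and the bound $2m$ falls out of the bookkeeping. You instead recast (\ref{sr}) as the Gram factorization $\zS\zS^{\top}=\zF$ over $\mZ$ and invoke (or re-derive by the peeling induction you sketch at the end) the congruence classification of symmetric bilinear forms in characteristic $2$; your blockwise factorization is accurate, including the three-column realization of a hyperbolic plane, the check that two columns cannot suffice, and the treatment of the diagonal (self-commutation) constraints via the parities of the row weights. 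What your approach buys is conceptual clarity --- it isolates the alternating (hyperbolic) part of the form as the exact obstruction to getting away with $m$ coordinates --- and in fact a sharper bound: since $r_1+2r_2\le m$ forces $r_2\le\lfloor m/2\rfloor$, you actually obtain $n=r_1+3r_2\le\lfloor 3m/2\rfloor$, which improves the paper's $2m$. What the paper's approach buys is elementarity and explicitness: it produces the degree assignment one generator at a time without appealing to normal forms. Both proofs are complete; yours only requires that the classification statement be either cited or proved via the induction you indicate.
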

\begin{proof}
We interpret $\mZ^{2m}$ as the set of functions $\{\pm 1,\dots,\pm m\}\to\mZ$, and denote by $p(i,j)\in\{0,1\}$ the \emph{parity} of $\zf(i,j)$: $(-1)^{p(i,j)}=\zf(i,j)$.

First, define $\zs_1\in \mZ^{2m}$ by $\zs_1(1)=1$, $\zs_1(-1)=1+p(1,1)\in\mZ$, and $\zs_1(k)=0$ for $|k|>1$. Then,  for $j=2,\dots,m$, define $\zs_j(1)=p(j,1)$ and $\zs_j(-1)=0$. Independently of the definition of the remaining values of $\zs_j$, Condition (\ref{sr}) is valid for $i=1$ and all $j=1,\dots, m$, since $\zs_1(k)=0$ for $|k|>1$.

Assume inductively that we have fixed $\zs_1,\dots,\zs_r$, with $\zs_j(k)=0$ for $|k|>j$, as well as the values $\zs_j(k)$, for $j=r+1,\dots, m$ and $|k|\le r$, so that (\ref{sr}) is valid for $i=1,\dots,r$ and all $j$.

Define:
$$\zs_{r+1}(r+1)=1\,, \quad\zs_{r+1}(-r-1)=1+\sum_{|k|=1}^r\zs_{r+1}(k)+p(r+1,r+1)\,,\quad \zs_{r+1}(k)=0\ \text{for}\ |k|>r+1\,.$$
Then, (\ref{sr}) is valid also for $i=j=r+1$. Putting now $\zs_j(-r-1)=0$ and
$$\zs_j(r+1)=\sum_{|k|=1}^r\zs_j(k)\zs_{r+1}(k)+p(j,r+1)$$
for $j=r+2,\dots, m$, we finish with fixed $\zs_1,\dots,\zs_{r+1}$, with $\zs_j(k)=0$ for $|k|>j$, and the values $\zs_j(k)$, for $j=r+2,\dots, m$ and $|k|\le r+1$, so that (\ref{sr}) is valid for $i=1,\dots,r+1$ and all $j$. This proves the inductive step and the theorem follows.
\end{proof}

\section{$\mathbb{Z}_2^n$-superdomains and their morphisms}

In view of Theorem \ref{SuffGrad}, we consider in the following only $\Z_2^n$-graded `coordinates' $y^1,\ldots,y^m$, with degree denoted by $\deg(y^i)$, which commute according to the $\Z_2^n$-commutation rule \be\label{Z2nCommRule} y^iy^j=(-1)^{\langle\deg(y^i),\deg(y^j)\rangle}y^jy^i\;,\ee where $\langle-,-\rangle$ is the standard `scalar product' in $\Z_2^n\,.$ The \emph{parity} of $y^i$ is the parity of the sum of the components of $\deg(y^i)\in\Z_2^n$. To develop a generalization of Supergeometry, we distinguish the coordinates $x^1,\dots, x^p$ of degree $0\in\Z_2^n$ and view them as \emph{variables} in an open subset $U\subset\R^p$. The remaining coordinates $\zx^1,\dots,\zx^q$ have nontrivial degrees $\deg(\zx^a)\in\Z_2^n\setminus\{ 0\}$ -- we refer to them as \emph{indeterminates} or \emph{formal variables}.

\subsection{Function sheaf of formal power series}

The first idea is to define the function sheaf $\frak O_U$ of a $\Z_2^n$-superdomain $\frak{U}=(U,\frak{O}_U),$ over any open $V\subset U$, as the $\Z_2^n$-commutative associative unital $\R$-algebra \be\label{local} \frak{O}_U(V)=C^\infty_U(V)[\zx^1,\dots,\zx^q] \ee of polynomials in the indeterminates $\zx^a$ with coefficients in smooth functions of $V$.\medskip

However, this approach has clear shortcomings.\medskip

First, {the} fundamental supergeometric result states that a function of a superdomain is invertible if and only if its projection onto the base is invertible. It is due to the nilpotency of the kernel $\frak J(V)$ of the base projection. However, as we allow here formal variables which are even, the ideal $\frak{J}(V)$ is not nilpotent in our setting and the mentioned basic result fails. As easily understood, we can remedy this problem by substituting polynomials with formal power series.

Second, for a proper development of differential calculus, we must be able to compose elements of degree $0$ with smooth functions. But what is $F(x+\zx^2)$ for a 1-variable smooth function $F$, a variable $x$ and a formal even variable $\zx\,$? Since $\xi$ is not nilpotent, the Taylor formula $F(x + \xi^2) = \sum_k\frac{1}{k!}\,F^{(k)}(x)\, \xi^{2k}$ leads again to a formal power series.

We are thus forced to complete the above structure sheaf to {\it formal power series in the indeterminates $\zx^a$}. This local model is the same than the one obtained by Molotkov \cite{Mol10} via his {\em functorial approach} to higher graded supermanifolds.\medskip

Further motivation for our choice of a {\it base made only of the zero degree coordinates} comes from the following observations. It should be noticed that, when the $\Z_2^n$ sign rule (\ref{Z2nCommRule}) replaces the classical super sign rule, even nonzero degree indeterminates may {\it anticommute} with even and with odd indeterminates (e.g., if they are $\Z_2^3$-graded and have the degrees $(1,1,0)$, $(0,1,1)$ and $(0,1,0)$, respectively). On the other hand, the algebra of quaternions $$\mathbb{H}=\R\oplus \R i\oplus \R j \oplus \R k$$ is $\Z_2^3$-commutative, if we choose the degrees $\deg (1)=(0,0,0)$, $\deg (i)=(1,1,0)$, $\deg (j)=(1,0,1)$, and $\deg (k)=(0,1,1)$. {\it All this shows that the even nonzero degree indeterminates are not usual even variables and that we should think about them as formal variables rather than as ordinary ones}. This confirms that the local $\Z_2^n$-superfunction algebra will be made of formal power series in the odd and the nonzero degree even indeterminates. Indeed, what would for instance be the definition of a smooth function with respect to $i,j,k$? What would be the meaning of the sine $\sin(a+bi+cj+dk)$ of a quaternion?

It follows that in the case of the tangent bundle to a supermanifold $\cM$, the local functions of the supermanifold $\sT[1]\cM$ and of the $\Z_2^2$-supermanifold $\sT\cM$ are quite different. As mentioned in the introduction, if $(x^i,\zx^a)$ are even and odd coordinates of $\cM$, the supermanifold $\sT[1]\cM$ has coordinates $(x^i,\zx^a,\dot x^j,\dot \zx^b)$ of parities $(0,1,1,0)$ and subject to the super commutation rule. The superfunctions of $\sT[1]\cM$ are thus locally the polynomials in $\zx^a,\dot x^j$ with coefficients in the smooth functions with respect to $x^i,\dot \zx^b$. The case of the $\Z_2^2$-supermanifold $\sT\cM$, with coordinates $(x^i,\zx^a,\dot x^j,\dot \zx^b)$ of bidegrees $((0,0),(0,1),(1,0),(1,1))$ and subject to the $\Z_2^2$-commutation rule, is different. Its $\Z_2^2$-superfunctions are locally the formal power series in the nonzero degree formal variables $\zx^a,\dot x^j, \dot \zx^b$ with coefficients in the smooth functions in $x^i$.

It is eventually clear that the base of a $\Z_2^n$-supermanifold corresponds, not to even variables, but to zero-degree ones. In this sense $\Z_2^n$-supermanifolds are {\it similar to $\Z$-graded manifolds}. Indeed, the base of a $\Z$-graded manifold $\cM$ can be recovered as the spectrum of $C^0(\cM)/(I\cap C^0(\cM))$, where $C^0(\cM)$ are the zero-degree functions of $\cM$ and where $I$ is the ideal generated by the coordinates of nonzero degree.\medskip

In what follows, we consider the $2^n$ tuples $s_0,s_1,\ldots,s_{2^n-1}\in\Z_2^n$ as ordered lexicographically: $s_0<s_1<\ldots<s_{2^n-1}\,$.

\begin{definition} Let $n,p,q_1,\ldots,q_{2^n-1}\in\N$ and set $\mathbf{q}=(q_1,\ldots,q_{2^n-1})$. Consider $p$ coordinates $x^1,\ldots,x^p$ of degree $s_0=0$ (resp., $q_1$ coordinates $\xi^1,\ldots,\xi^{q_1}$ of degree $s_1$, $q_2$ coordinates $\xi^{q_1+1},\ldots,\xi^{q_1+q_2}$ of degree $s_2$, ...) and denote by $x=(x^1,\ldots,x^p)$ (resp., $\xi=(\xi^1,\ldots,\xi^q)$) the tuple of all zero degree (resp., all the nonzero degree) coordinates (of course $q=\sum_kq_k$). These coordinates $(x,\xi)$ commute according to the $\Z_2^n$-commutation rule (\ref{Z2nCommRule}). A \emph{$\Z_2^n$-superdomain} (called also a \emph{color superdomain}) of dimension $p|\mathbf{q}$ is a \emph{ringed space} ${\cU^{\,p|\mathbf{q}}} = (U,{\cO}_{U})$, where $U\subset\R^p$ is the open range of $x$, and where the structure sheaf is defined over any open $V\subset U$ as the $\Z_2^n$-commutative associative unital $\R$-algebra
\be\label{local}
{\cO}_U(V)=C^\infty_U(V)[[\zx^1,\dots,\zx^q]]
\ee
of formal power series
\be\label{pf1}
 f(x,\zx)= \sum_{|\mu|=0}^{\infty}   f_{\mu_1\ldots\mu_q}(x)\;(\zx^1)^{\mu_1}\ldots(\zx^q)^{\mu_q} =\sum_{|\mu|=0}^{\infty} f_\mu(x) \zx^\mu\;
\ee
in the formal variables $\xi^1,\dots,\xi^q$ with coefficients in $\Ci_U(V)$ (standard multiindex notation).\end{definition}

We refer to any ringed space of $\Z_2^n$-commutative associative unital $\R$-algebras as a \emph{$\Z_2^n$-ringed space} and to the preceding functions (\ref{pf1}) as the {\em local $\Z_2^n$-superfunctions} (or just $\Z_2^n$-functions).

\begin{example}\label{FundaExa} Consider the case $n=2$ and $p|(q_1,q_2,q_3)=1|(1,1,1)$, write for simplicity $(x,\xi,\zh,\zy)$ instead of $(x,\xi^1,\xi^2,\xi^3)$, and note that the degrees of these coordinates are $(0,0)$, $(0,1)$, $(1,0)$, and $(1,1),$ respectively. A $\Z_2^2$-function is then of the form $$\label{Superfunction}f(x,\xi,\zh,\zy)=\sum_{r\ge 0} f_r(x)\zy^{2r}+\sum_{r\ge 0} g_r(x)\zy^{2r+1}\xi\zh+\sum_{r\ge 0} h_r(x)\zy^{2r}\xi+\sum_{r\ge 0} k_r(x)\zy^{2r+1}\zh$$ \be +\sum_{r\ge 0} \ell_r(x)\zy^{2r}\zh+\sum_{r\ge 0} m_r(x)\zy^{2r+1}\xi+\sum_{r\ge 0} n_r(x)\zy^{2r+1}+\sum_{r\ge 0} p_r(x)\zy^{2r}\xi\zh\;,\ee where the sums are formal series and the functions in $x$ are smooth. Note that the first (resp., second, third, fourth) two sums contain terms of $\Z_2^2$-degree $(0,0)$ (resp., $(0,1)$, $(1,0)$, and $(1,1)$).\end{example}

\subsection{Locality of $\Z_2^n$-superdomains}

In classical Supergeometry, a (super) ringed space is called a \emph{space} if all its stalks are local rings, i.e., rings that have a unique maximal homogeneous ideal. Such ringed spaces are referred to as \emph{locally ringed spaces}. Further, a locally ringed space is a \emph{supermanifold} if it is locally modelled on a superdomain. Superdomains are therefore `trivial' locally ringed spaces. Of course, one has to verify that the stalks of a superdomain are local rings.\medskip

To show that the stalks of a $\Z_2^n$-superdomain are local rings, i.e., that these domains are \emph{locally $\Z_2 ^n$-ringed spaces}, we need two well-known lemmas, where the second is a consequence of the first.

Consider the algebra (\ref{local}), or, a bit more generally, the $\Z_2^n$-commutative associative unital $R$-algebra $R[[\xi^1,\ldots,\xi^q]]$, over a commutative unital ring $R$, whose elements are assumed to be central.

\begin{lem} A series $1-v$ in $R[[\xi^1,\ldots,\xi^q]]$, where $v$ has no independent term, is invertible, with inverse $v^{-1}=\sum_{k\geq0} v^k$.\end{lem}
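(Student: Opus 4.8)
The plan is to recognize $\sum_{k\geq 0} v^k$ as the formal geometric series and to show that it is a well-defined element of $R[[\xi^1,\ldots,\xi^q]]$ whose product with $1-v$ telescopes to $1$. The only real subtlety is that this infinite sum must be interpreted in the formal-power-series sense (degree by degree) and not as a convergent series in any analytic norm; once summability is established, the verification of the inverse property is routine.

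First I would grade $R[[\xi^1,\ldots,\xi^q]]$ by total degree in the formal variables: every element $f$ decomposes uniquely as $f=\sum_{d\geq 0} f_{(d)}$, where $f_{(d)}$ collects the monomials $\xi^\mu$ with $|\mu|=d$ and coefficients in $R$. The key structural observation is that, because reordering the $\xi^a$ into canonical position according to (\ref{Z2nCommRule}) turns any product $\xi^\mu\xi^\nu$ into $\pm\,\xi^{\mu+\nu}$ or into $0$ (odd squares vanishing over $\R$), and because the elements of $R$ are central, the product of a degree-$i$ homogeneous piece with a degree-$j$ homogeneous piece is again homogeneous, of degree exactly $i+j$. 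Writing $\operatorname{ord}(f)$ for the lowest degree occurring in $f$, this yields $\operatorname{ord}(fg)\geq\operatorname{ord}(f)+\operatorname{ord}(g)$. Since $v$ has no independent term, $\operatorname{ord}(v)\geq 1$, hence $\operatorname{ord}(v^k)\geq k$, i.e.\ $v^k$ has vanishing components in all degrees strictly below $k$. Consequently, for each fixed $d$ the degree-$d$ component of $\sum_{k\geq 0} v^k$ reduces to the \emph{finite} sum $\sum_{k=0}^{d}(v^k)_{(d)}$, so $w:=\sum_{k\geq 0} v^k$ is a genuine element of $R[[\xi^1,\ldots,\xi^q]]$. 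This summability argument is the heart of the proof.

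Finally I would verify the inverse property one degree at a time. For each $N$ the finite partial sum satisfies $(1-v)\sum_{k=0}^{N} v^k = 1 - v^{N+1}$, and since $\operatorname{ord}(v^{N+1})\geq N+1$, the right-hand side agrees with $1$ in every degree $\leq N$. Letting $N\to\infty$ forces $(1-v)w=1$ in each fixed degree, hence $(1-v)w=1$; as $w$ is built solely from powers of $v$, it commutes with $1-v$, so $w(1-v)=1$ as well, and $1-v$ is invertible with inverse $w=\sum_{k\geq 0} v^k$. I do not expect a deep obstacle here: the single point requiring care is the summability step, namely confirming that multiplication in the $\Z_2^n$-commutative ring genuinely preserves the total-degree grading (signs and vanishing odd squares notwithstanding, total degree is always additive), which is exactly what makes the formal geometric series converge.
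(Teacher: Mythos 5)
Your proof is correct: the paper states this lemma without proof (calling it ``well-known''), and your argument --- degree-by-degree summability of the geometric series via $\operatorname{ord}(v^k)\ge k$, followed by the telescoping identity $(1-v)\sum_{k=0}^{N}v^k=1-v^{N+1}$ --- is exactly the standard justification it has in mind; your care in checking that the $\Z_2^n$-sign rule only contributes factors of $\pm1$ (so total degree remains superadditive under multiplication) is the one point specific to this setting, and you handle it properly. Note only that the lemma's displayed formula ``$v^{-1}=\sum_{k\ge0}v^k$'' is a typo for $(1-v)^{-1}$, and your proof establishes the correct statement.
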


\begin{lem}\label{lem:FPSinvert}
A series in $R[[\xi^1,\ldots,\xi^q]]$ is invertible if and only if its independent term is invertible in $R$.
\end{lem}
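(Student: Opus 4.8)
The plan is to prove the two implications separately, obtaining the non-trivial direction directly from the preceding lemma on the geometric series. Write a general series as $f=\sum_\mu f_\mu\,\xi^\mu\in R[[\xi^1,\dots,\xi^q]]$ and denote by $f_0$ its independent term (the coefficient of the constant monomial), which lies in $R$.

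For the necessity, I would introduce the augmentation map $\eps\colon R[[\xi^1,\dots,\xi^q]]\to R$ sending a series to its independent term, $\eps(f)=f_0$. The only contribution to the constant monomial of a product of two series comes from the product of their two constant terms, and since the elements of $R$ are assumed central, no sign from the $\Z_2^n$-commutation rule (\ref{Z2nCommRule}) ever intervenes; hence $\eps$ is a unital homomorphism of $R$-algebras. Therefore, if $f$ is invertible with inverse $g$, applying $\eps$ to $fg=gf=1$ gives $f_0\,\eps(g)=\eps(g)\,f_0=1$, so $f_0\in R^\times$.

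For the sufficiency, assume $f_0\in R^\times$ and put $v=1-f_0^{-1}f$. Because $f_0^{-1}$ is central, the independent term of $f_0^{-1}f$ equals $f_0^{-1}f_0=1$, so $v$ has no independent term and $f=f_0(1-v)$. By the preceding lemma, $1-v$ is invertible with inverse $\sum_{k\ge 0}v^k$, a well-defined element of $R[[\xi^1,\dots,\xi^q]]$ since $v^k$ involves only monomials of total degree $\ge k$. I would then verify that $\big(\sum_{k\ge 0}v^k\big)f_0^{-1}$ is a genuine two-sided inverse of $f$: using the centrality of $f_0^{-1}$ and the fact that $1-v$ commutes with each of its own powers, one finds $f\cdot\big(\sum_k v^k\big)f_0^{-1}=f_0(1-v)(1-v)^{-1}f_0^{-1}=1$, and symmetrically on the other side.

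The only genuine subtlety is the convergence, in the formal-power-series sense, of the geometric series $\sum_k v^k$; this is precisely the content of the preceding lemma and can therefore be invoked directly. Everything else is bookkeeping of left and right factors, which is harmless once one notes that $R$ is central and that $v$—hence every power of $v$—commutes with $1-v$, so that the one-sided inverse supplied by that lemma is automatically two-sided.
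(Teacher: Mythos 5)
Your proof is correct and follows exactly the route the paper intends: the paper states this lemma without written proof, remarking only that it ``is a consequence of the first'' (geometric-series) lemma, and your factorization $f=f_0(1-v)$ with $v=1-f_0^{-1}f$ together with the augmentation argument for necessity is precisely that derivation. Nothing is missing; the centrality of $R$ handles the two-sidedness just as you say.
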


\begin{prop}\label{prop:domlocality} Any $\Z_2^n$-superdomain $(U,C^\infty_U[[\xi^1,\dots,\xi^q]])$ is a locally $\Z_2^n$-ringed space, i.e., for any $x\in U$, the stalk $\Ci_{U,x}[[\xi^1,\ldots,\xi^q]]$ has a unique maximal homogeneous ideal
\be\label{MaxIdSupDom}
\frak{m}_x=\{[f]_x: f_0(x)=0\}\;,
\ee where $f_0$ is the independent term of $f$.
\end{prop}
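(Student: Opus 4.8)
The plan is to identify the invertible elements of the stalk via Lemma \ref{lem:FPSinvert} and then check that the non-invertible ones form exactly the homogeneous ideal $\mathfrak{m}_x$. First I would recall that the stalk $\Ci_{U,x}$ of the sheaf of smooth functions is itself a local ring: a germ $[g]_x$ is invertible if and only if $g(x)\neq 0$, the unique maximal ideal being the germs that vanish at $x$. Taking the stalk as identified in the statement with $\Ci_{U,x}[[\xi^1,\dots,\xi^q]]$, I apply Lemma \ref{lem:FPSinvert} with $R=\Ci_{U,x}$ (a commutative unital ring whose elements are central, so the hypotheses are met): a germ $[f]_x$ is invertible exactly when its independent term $[f_0]_x$ is invertible in $\Ci_{U,x}$, i.e. exactly when $f_0(x)\neq 0$. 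Hence the set of non-units is precisely $\mathfrak{m}_x=\{[f]_x : f_0(x)=0\}$.

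Next I would verify that $\mathfrak{m}_x$ is a homogeneous ideal. Additivity and absorption follow from $(f+g)_0=f_0+g_0$ and $(hf)_0=h_0 f_0$, so that $f_0(x)=g_0(x)=0$ gives $(f+g)_0(x)=0$, and $(hf)_0(x)=h_0(x)f_0(x)=0$ for every $h$. For homogeneity, decompose $f=\sum_{s} f^{(s)}$ into its $\Z_2^n$-homogeneous components. The independent (i.e. $\xi$-constant) part of $f$ is a smooth function and thus has degree $s_0=0$; consequently only $f^{(0)}$ can carry a nonzero independent term, with $(f^{(0)})_0=f_0$ and $(f^{(s)})_0=0$ for $s\neq 0$. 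Therefore every $f^{(s)}$ with $s\neq 0$ lies in $\mathfrak{m}_x$ automatically, and if $f\in\mathfrak{m}_x$ then $(f^{(0)})_0(x)=f_0(x)=0$ shows $f^{(0)}\in\mathfrak{m}_x$ as well, so all homogeneous components of $f$ belong to $\mathfrak{m}_x$.

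Finally, since $1$ has independent term $1$ with value $1\neq 0$, we have $1\notin\mathfrak{m}_x$, so $\mathfrak{m}_x$ is proper; and as it consists of exactly the non-units, any proper homogeneous ideal is contained in it. Hence $\mathfrak{m}_x$ is the unique maximal homogeneous ideal, which is precisely the assertion that the $\Z_2^n$-superdomain is a locally $\Z_2^n$-ringed space. The step needing the most care is the homogeneity of $\mathfrak{m}_x$: in the $\Z_2^n$-graded setting nonzero-degree even indeterminates contribute to the degree-$0$ component beyond the independent term (e.g. $\zy^2$ has degree $0$ although $\zy$ does not), so one must argue that the defining condition is preserved under passage to homogeneous components. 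The observation that a degree-$0$, $\xi$-constant term can only originate from the degree-$0$ homogeneous part resolves this cleanly.
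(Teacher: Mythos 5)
Your proof is correct and follows exactly the route the paper intends: the paper derives Proposition \ref{prop:domlocality} directly from Lemma \ref{lem:FPSinvert} applied to $R=\Ci_{U,x}$, leaving the details implicit. You have simply filled in those details, and your extra care with the homogeneity of $\frak{m}_x$ (observing that the independent term lives entirely in the degree-$0$ component) is a sound and welcome elaboration rather than a deviation.
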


Lemma \ref{lem:FPSinvert} implies not only Proposition \ref{prop:domlocality} but also the following.

\begin{cor}\label{p1} For any open $V\subset U$, a $\Z_2^n$-function $$f\in {\cO}_U(V)=\Ci_U(V)[[\xi^1,\ldots,\xi^q]]$$ is invertible in ${\cO}_U(V)$ if and only if its independent term $f_0$ is invertible in $\Ci_U(V)$.
\end{cor}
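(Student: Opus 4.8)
The plan is to obtain the statement as a direct specialization of Lemma \ref{lem:FPSinvert}, applied with the coefficient ring $R := \Ci_U(V)$. For each fixed open $V \subset U$ the algebra in question is exactly $\cO_U(V) = \Ci_U(V)[[\xi^1,\ldots,\xi^q]]$, which already has the shape $R[[\xi^1,\ldots,\xi^q]]$ treated in the lemma; so the only work is to check that this $R$ satisfies the two standing hypotheses there, namely that it is a commutative unital ring whose elements are central in $\cO_U(V)$.

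First I would record that $\Ci_U(V)$ is a commutative unital $\R$-algebra, since pointwise multiplication of smooth functions is commutative and admits the constant function $1$ as unit. Second, I would verify centrality: every $a \in \Ci_U(V)$ carries $\mZ^n$-degree $s_0 = 0$, so for any homogeneous $y \in \cO_U(V)$ the commutation rule (\ref{Z2nCommRule}) yields $a\,y = (-1)^{\langle 0,\deg(y)\rangle}\, y\,a = y\,a$, because the scalar product (\ref{sp}) vanishes whenever one of its arguments is $0$. Hence the degree-zero functions are indeed central, and all hypotheses of Lemma \ref{lem:FPSinvert} are met.

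With these verifications in place, Lemma \ref{lem:FPSinvert} applies verbatim and states that $f \in \cO_U(V)$ is invertible if and only if its independent term is invertible in $R = \Ci_U(V)$; since the independent term of $f$ in the sense of the lemma is precisely the coefficient $f_0$ appearing in (\ref{pf1}), this is exactly the asserted equivalence. The proof therefore mirrors the remark that Lemma \ref{lem:FPSinvert} already implies Proposition \ref{prop:domlocality}: the corollary is the same invertibility criterion, read in the sheaf $\cO_U$ rather than in a single stalk.

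The main obstacle is, frankly, minor: it is simply to notice that the framework of Lemma \ref{lem:FPSinvert} is stated over an arbitrary central coefficient ring, so that no new argument about formal power series is needed and one only has to justify that $\Ci_U(V)$ plays the role of $R$. The single substantive point to be careful about is the centrality hypothesis, which could fail for a coefficient ring built from nonzero-degree coordinates, but holds here precisely because the base consists exclusively of the zero-degree coordinates $x$, whose smooth functions have vanishing scalar product with every degree.
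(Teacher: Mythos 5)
Your proposal is correct and follows exactly the paper's route: the paper derives Corollary \ref{p1} as an immediate consequence of Lemma \ref{lem:FPSinvert} applied with $R=\Ci_U(V)$, and your verification that this $R$ is commutative, unital, and central (via the vanishing of the scalar product against the zero degree) merely makes explicit what the paper leaves implicit. Nothing is missing.
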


This corollary guarantees that crucial results of classical Supergeometry still hold in $\Z_2^n$-Supergeometry, although formal variables are no longer necessarily nilpotent. Among these consequences are the existence of a smooth structure on the topological base space $M$ of any $\Z_2^n$-supermanifold $(M,\cA_M)$, as well as the fundamental short exact sequence of sheaves $0\to\cJ_M\to\cA_M\to\cC^\infty_M\to 0\,.$

\subsection{Completeness of $\Z_2^n$-function algebras}\label{ComplDom}

Consider a $\Z_2^n$-superdomain with $\Z_2^n$-functions $$f(x,\xi)=\sum_{|\mu|=0}^{\infty} f_\mu(x) \zx^\mu\in \cO=\Ci[[\xi^1,\ldots,\xi^q]]\;,$$ where we omitted $U$ and $V$ for convenience. The number $k:=|\zm|$ of generators defines an $\N$-grading in $\cO$ that induces a decreasing filtration $\cO_\ell=\Ci[[\xi^1,\ldots,\xi^q]]_{\ge\ell}$, where subscript $\ge \ell$ means that we consider only those series whose terms contain at least $\ell$ parameters $\zx^a$. If $J=\ker\ze$ denotes the kernel of the projection $\ze:\cO\ni f\mapsto f_0\in\Ci$ of $\Z_2^n$- onto base-functions, the ideal $J$ is given by $J=J^1=\cO_1$ and $J^\ell=\cO_\ell\,$: $\cO\supset J\supset J^2\supset\ldots$ The sequence $\cO/J\leftarrow \cO/J^2\leftarrow\cO/J^3\leftarrow\ldots\;$, which can be identified with the sequence $\Ci\leftarrow \Ci[[\xi^1,\ldots,\xi^q]]_{\le 1}\leftarrow \Ci[[\xi^1,\ldots,\xi^q]]_{\le 2}\leftarrow\ldots\;$, is an inverse system, whose limit is \be\label{Completeness}\varprojlim_\ell \cO/J^\ell=\cO\;.\ee Moreover, if $\frak J$ denotes the similar ideal defined in the frame of polynomial $\Z_2^n$-functions $\frak O$, see above, we also have \be\label{Completeness2}\varprojlim_\ell \frak{O}/\frak{J}^\ell=\cO\;.\ee

\begin{prop}\label{CompletnessLoc} The algebra $\cO_U(V)=\Ci_U(V)[[\xi^1,\ldots,\xi^q]]$ of local $\,\Z_2^n$-functions is Hausdorff-complete with respect to the $J_U(V)$-adic topology, and this algebra $\cO_U(V)$ of formal power series is the completion $\widehat{\frak O}_U(V)$ with respect to the $\frak J\,_U(V)$-adic topology of the algebra $\frak O_U(V)$ of polynomials.\end{prop}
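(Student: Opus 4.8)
The plan is to read both assertions as topological repackagings of the inverse-limit identities \eqref{Completeness} and \eqref{Completeness2}, and to supply the elementary verifications that convert those identities into statements about separation, completeness, and completion. Throughout I suppress $U$ and $V$ and write $J$, $\frak{J}$ as in the preceding discussion.

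For the first claim, recall that for the filtration by the powers $J^\ell$ of an ideal, Hausdorff-completeness of the $J$-adic topology is precisely the statement that the canonical map $\cO\to\varprojlim_\ell\cO/J^\ell$ is an isomorphism, its injectivity being the separation property $\bigcap_\ell J^\ell=\{0\}$ and its surjectivity being completeness. Using $J^\ell=\cO_\ell$, separation is immediate: a series belonging to every $\cO_\ell$ has $f_\mu=0$ for all $\mu$, since for each $\mu$ one may take $\ell>|\mu|$. Here it is worth noting that, because the nonzero-degree even variables are not nilpotent, $\cO_\ell\neq\{0\}$ for every $\ell$ and the filtration is genuinely infinite; this is exactly what forces formal series rather than polynomials. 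Completeness is then the content of \eqref{Completeness}: an element of $\varprojlim_\ell\cO/J^\ell$ is a coherent family of truncations to $\xi$-degree $<\ell$, and such a family is the same datum as a single formal power series, reconstructed coefficient by coefficient. This proves that $\cO$ is Hausdorff-complete.

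For the second claim I would compare $\frak{O}$ and $\cO$ through the inclusion $\iota:\frak{O}\hookrightarrow\cO$. The key step is the identity $\frak{J}^\ell=\frak{O}\cap J^\ell=\frak{O}_\ell$, under which $\iota$ induces injections $\frak{O}/\frak{J}^\ell\hookrightarrow\cO/J^\ell$; these are in fact bijections, since modulo $J^\ell$ every series is represented by its polynomial truncation to $\xi$-degree $<\ell$, and they are compatible with the connecting maps of the two inverse systems. Passing to the limit and invoking \eqref{Completeness2} yields
$$\widehat{\frak{O}}=\varprojlim_\ell\frak{O}/\frak{J}^\ell\;\cong\;\varprojlim_\ell\cO/J^\ell=\cO\;.$$
It remains to check that this identification sends the canonical map $\frak{O}\to\widehat{\frak{O}}$ to $\iota$ and that $\frak{O}$ is dense in $\cO$: any series differs from its degree-$<\ell$ truncation by an element of $\cO_\ell=J^\ell$, so truncations converge in the $J$-adic topology. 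Hence $\cO$ is the $\frak{J}$-adic completion of $\frak{O}$.

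The main obstacle is the identity $\frak{J}^\ell=\frak{O}_\ell$, i.e. that the $\ell$-th power of the ideal $\frak{J}=\frak{O}_1$ generated by the indeterminates $\xi^a$ consists of exactly those polynomials all of whose monomials have total $\xi$-degree at least $\ell$. This holds because the $\N$-grading by number of generators is multiplicative and the $\Z_2^n$-commutation rule \eqref{Z2nCommRule} merely reorders factors up to a sign, so that products of $\ell$ generators still span $\frak{O}_\ell$ despite the failure of ordinary commutativity. Once this is settled, the surrounding inverse-limit and density arguments are routine bookkeeping.
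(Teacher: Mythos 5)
Your proposal is correct and follows essentially the same route as the paper, which establishes the proposition through the identifications $J^\ell=\cO_\ell$, $\cO/J^\ell\simeq \Ci[[\xi^1,\ldots,\xi^q]]_{\le \ell-1}$ and the inverse-limit formulas (\ref{Completeness})--(\ref{Completeness2}) given in Subsection \ref{ComplDom}. The details you add (separation $\bigcap_\ell J^\ell=\{0\}$, the bijections $\frak{O}/\frak{J}^\ell\to\cO/J^\ell$ via polynomial truncation, and the verification that $\frak{J}^\ell=\frak{O}_\ell$ despite the $\Z_2^n$-commutation rule) are exactly the bookkeeping the paper leaves implicit.
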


Proposition \ref{CompletnessLoc} will be used in the proof of the fundamental morphism theorem for $\Z_2^n$-supermanifolds, which we sketch in the next subsection in the case of $\Z_2^n$-superdomains.

\subsection{Morphisms of $\Z_2^n$-superdomains}\label{MorphTheo}

The following remark shows that {\it morphisms of $\Z_2^n$-superdomains} can be viewed as in classical differential Geometry. It will be formulated more rigorously in Section \ref{mor}. \medskip

Consider two $\Z_2^n$-superdomains of dimension $p|\mathbf{q}$ and $p'|\mathbf{q}'$ over open subsets $U\subset\R^p$ and $U'\subset\R^{p'}$, respectively. Roughly, $\Z_2^n$-morphisms between these $\Z_2^n$-superdomains correspond to {\it graded unital $\R$-algebra morphisms}
$$\phi^*:\Ci(V')[[\zx'^1,\dots,\zx'^{q'}]]\to \Ci(V)[[\zx^1,\dots,\zx^{q}]]\;$$
and are {\it determined by their coordinate form}
\bea\label{mor1}
x'^i&=&\zf^i(x)+\sum_{\zs(\mu)=0} f^i_\mu(x) \zx^\mu\,,\\
\zx'^a&=&\sum_{\zs(\mu)=\deg(\zx'^a)} f^a_\mu(x) \zx^\mu\,,\nn
\eea
where the sums are formal series with coefficients in smooth functions, where $\zs(\zm)$ is the degree $\zm_1\deg(\zx^1)+\ldots+\zm_q\deg(\zx^q)$ of the term characterized by $\zm$, and where $\zf:V\ni (x^1,\ldots,x^p)\mapsto (x'^1,\ldots,x'^{p'})\in V'$ is a smooth map.\medskip

\begin{example}\label{FundaExa2} In the case of $\Z_2^2$-superdomains of dimension $1|(1,1,1)$ with variables $(x,\xi,\zh,\zy)$ (resp., $(y,\za,\zb,\zg)$) of $\Z_2^2$-degrees $((0,0),(0,1),(1,0),(1,1))$, a $\Z_2^2$-morphism can be viewed as usual: \be\label{Morph}\left\{\begin{array}{c} y=\sum_r f^y_r(x)\zy^{2r}+\sum_r g^y_r(x)\zy^{2r+1}\xi\zh\;,\\ \za=\sum_r f^\za_r(x)\zy^{2r}\xi+\sum_r g^\za_r(x)\zy^{2r+1}\zh\;, \\ \zb=\sum_r f^\zb_r(x)\zy^{2r}\zh+\sum_r g^\zb_r(x)\zy^{2r+1}\xi\;, \\ \zg=\sum_r f^\zg_r(x)\zy^{2r+1}+\sum_r g^\zg_r(x)\zy^{2r}\xi\zh\;.\end{array}\right.\;\ee\end{example}

\begin{remark} It is easily seen from Equation (\ref{Morph}) that the Jacobian matrix of this $\Z_2^n$-morphism is a $\Z_2^n$-degree $0$ matrix with entries in the $\Z_2^n$-function algebra. In the case of a $\Z_2^n$-diffeomorphism, this matrix is invertible, so that we can compute its $\Z_2^n$-Berezinian \cite{COP12} and further develop the $\Z_2^n$ integral calculus (which is actually the core of the present $\Z_2^n$-project, since it turned out to be really different from ordinary super integral calculus). \end{remark}

To explain the above claim about $\Z_2^n$-morphisms, we have to prove that any $\Z_2^n$-morphism has a coordinate form of the announced type (what is almost obvious), and that, conversely, any pullbacks $\phi^*(x'^i)\;(\simeq x'^i)$ and $\phi^*(\xi'^a)\;(\simeq \xi'^a)$ of the form (\ref{mor1}) uniquely extend to a $\Z_2^n$-morphism. We will show here that such a prolonging $\Z_2^n$-morphism does exist. Uniqueness (and other details) will be proven independently in the more general case of $\Z_2^n$-morphisms of $\Z_2^n$-supermanifolds.\medskip

In the sequel we write $\phi^*(x'^ i)=\zf^i(x)+j^i(x,\xi)$, with $j^i(x,\zx)=\sum_{\zs(\mu)=0} f^i_\mu(x) \zx^\mu\in J$. For any $$g(x',\xi')=\sum_{|\zn|\ge 0}g_\zn(x')\xi'^\zn\in \Ci(V')[[\zx'^1,\dots,\zx'^{q'}]]\;,$$ we set \be\label{Pullback}(\phi^*(g))(x,\xi)=\sum_{|\zn|\ge 0} \phi^*(g_\zn(x')) (\zvf^*(\xi'))^\zn\;,\ee where
\be\label{FormTayl} \phi^*(g_\zn(x'))=g_\zn(\phi^*(x'))=g_\zn(\zf(x)+j(x,\zx))= \sum_{|\za|\ge 0} \frac{1}{\za!}\; (\p ^{\za}_{x'}g_{\zn})(\zf(x))\; j^{\za}(x,\xi)\;\ee is a formal Taylor expansion -- we use here the multiindex notation: $j^\za=(j^1)^{\za^1}\ldots (j^{p'})^{\za^{p'}}\in J^{|\za|}$. In fact the {\small RHS} of (\ref{FormTayl}) is a series of series and it could lead to a rearranged series with non-converging series of $\Ci(V)$-coefficients. However, any type of monomial in the formal variables $\xi^a$ contains a fixed number $N$ of parameters. As the terms indexed by $|\za|>N$ contain at least $N+1$ parameters, they not contribute to the considered monomial. The coefficient of the latter is therefore a finite sum in $\Ci(V)$, so that the {\small RHS} of (\ref{FormTayl}) is actually a series in $\Ci(V)[[\zx^1,\dots,\zx^{q}]]\,$. The same argument can be used for the {\small RHS} of (\ref{Pullback}).\medskip

It is easily seen that the thus defined pullback map $\phi^*$ is a graded unital $\R$-algebra morphism.\medskip

As mentioned before, the precise definition of a $\Z_2^n$-morphism as a morphism of locally $\Z_2^n$-ringed spaces will be given in Section $\ref{Z2nSuperMan}$, and the preceding explanation will be completed and generalized.

\section{$\Z_2^n$-Supermanifolds}\label{Z2nSuperMan}

\subsection{Definitions}\label{Definition}

It is clear that a $\Z_2^n$-supermanifold is locally modelled on a $\Z_2^n$-superdomain. Here `modelled' means `isomorphic' as locally $\Z_2^n$-ringed space. Such an isomorphism is of course an invertible morphism of locally $\Z_2^n$-ringed spaces. The morphisms of the category $\tt LZRS$ of locally $\Z_2^n$-ringed spaces are the morphisms of $\Z_2^n$-ringed spaces, i.e. the sheaf morphisms $\Psi=(\psi,\psi^*)$ such that the naturally induced algebra morphisms between stalks respect the maximal ideals. In the following, all topological spaces are assumed to be Hausdorff and second-countable and the elements of $\Z_2^n$, $n\in\N$, are considered as ordered lexicographically.

\begin{definition}[Ringed space definition]\label{DefZSupMan} A (smooth) \emph{$\Z_2^n$-supermanifold} (or a \emph{color supermanifold}) $\cM$ of dimension $p|\mathbf{q}$, $p\in\N$, $\mathbf{q}=(q_1,\ldots,q_{2^n-1})\in \N^{2^n-1}$, is a locally $\Z_2^n$-ringed space $(M,\cA_M)$ that is locally isomorphic to the $\Z_2^n$-superdomain $(\R^p,\Ci_{\R^p}[[\xi^1,\ldots,\xi^q]])$, where $q=\sum_kq_k$, where $\xi^1,\ldots,\xi^q$ are $\Z_2^n$-commuting formal variables of which $q_k$ have the $k$-th degree in $\Z_2^n\setminus\{0\}$, and where $\Ci_{\R^p}$ is the function sheaf of the Euclidean space $\R^p$. \end{definition}

Many geometric concepts can be glued from local pieces: {\it they can be defined via a cover by coordinate systems, with specific coordinate transformations that satisfy the usual cocycle condition}. The same holds for $\Z_2^n$-supermanifolds.

\begin{remark} Roughly, a $\Z_2^n$-supermanifold can be viewed as a topological space $M$, which is covered by {\it $\Z_2^n$-graded $\Z_2^n$-commutative coordinate systems} $(x,\xi)$ ($x$ can be interpreted as a homeomorphism $x(m)\rightleftarrows m(x)$ between its Euclidean open range $U$ and an open subset of $M$ (which is often also denoted by $U$)) and is endowed with {\it coordinate transformations that respect the $\Z_2^n$-degree} and satisfy the {\it cocycle condition}.\end{remark}

The precise alternative definition of $\Z_2^n$-supermanifolds follows naturally from this approximate idea.

\begin{definition} A \emph{$\Z_2^n$-chart} (or $\Z_2^n$-coordinate system) over a topological space $M$ is a locally $\Z_2^n$-ringed space $${\cal U}=(U,\Ci_{U}[[\xi^1,\ldots,\xi^q]]),\; U\subset\R^p, p,q\in\N\;,$$ together with a homeomorphism $\psi:U\to \psi(U)$, where $\psi(U)$ is an open subset of $M$.\end{definition}

Given two {$\Z_2^n$-charts} $({\cal U}_\za,\psi_\za)$ and $({\cal U}_\zb,\psi_\zb)$ over $M$, we will denote by $\psi_{\zb\za}$ the homeomorphism \be\label{UnderHomeo}\psi_{\zb\za}:=\psi_\zb^{-1}\psi_\za:V_{\zb\za}:=\psi_\za^{-1}(\psi_\za(U_\za)\cap \psi_\zb(U_\zb))\to V_{\za\zb}:=\psi_\zb^{-1}(\psi_\za(U_\za)\cap \psi_\zb(U_\zb))\;.\ee

Whereas in classical Differential Geometry the coordinate transformations are completely defined by the coordinate systems, in $\Z_2^n$-Supergeometry, they have to be specified separately.\medskip

\begin{definition} A \emph{coordinate transformation} between two $\Z_2^n$-charts $({\cal U}_\za,\psi_\za)$ and $({\cal U}_\zb,\psi_\zb)$ over $M$ is an {isomorphism} of locally $\Z_2^n$-ringed space $\Psi_{\zb\za}=(\psi_{\zb\za},\psi^*_{\zb\za}):{\cal U}_{\za}|_{V_{\zb\za}}\to {\cal U}_{\zb}|_{V_{\za\zb}},$ where the source and target are restrictions of `sheaves' (note that the underlying homeomorphism is (\ref{UnderHomeo})).

A $\Z_2^n$-\emph{atlas} over $M$ is a covering $({\cal U}_\za,\psi_\za)_\za$ by $\Z_2^n$-charts together with a coordinate transformation for each pair of $\Z_2^n$-charts, such that the usual {cocycle} condition $\Psi_{\zb\zg}\Psi_{\zg\za}=\Psi_{\zb\za}$ holds (appropriate restrictions are understood).\end{definition}

\begin{definition}[Atlas definition] A (smooth) \emph{$\Z_2^n$-supermanifold} $\cM$ is a topological space $M$ together with a preferred $\Z_2^n$-atlas $({\cal U}_\za,\psi_\za)_\za$ over it. \end{definition}

\section{Examples}

\begin{example} For $n=1$, we recover classical supermanifolds. Indeed, in this case there are no formal variables that bear powers higher than 1 and formal series are thus just polynomials.
\end{example}

\begin{example}\label{TB} We already mentioned that the tangent bundle $\sss T \cM$ of a standard $\Z_2$-super\-man\-i\-fold $\cM$ gives rise to a $\Z_2^2$-supermanifold. Indeed, the local coordinates $(x,\zx)$ on $\cM$ induce canonically local coordinates $(x,\zx,\dot x,\dot \zx)$ on ${\sss T \cM}$, which transform according to the rule \be\label{CoordTransTB}\begin{array}{l} x'=x'(x,\zx)\\ \zx'=\zx'(x,\zx)\end{array}\quad \text{and}\quad\begin{array}{l}\dot x'=\dot x\,\p_{x}x'+\dot\zx\,\p_\zx x'\\\dot\zx'=\dot x\,\p_x\zx'+\dot\zx\,\p_\zx\zx'\end{array}\;.\ee We now assign the $\Z_2^2$-degrees $((0,0),(0,1),(1,0),(1,1))$ to the coordinates $(x,\zx,\dot x,\dot \zx)$ and consider them as $\Z_2^2$-commutative. In view of (\ref{CoordTransTB}), this can be done in a coherent manner. The cocycle condition reduces to the chain rule. Hence, the tangent bundle $\sss T\cM$ is a $\Z_2^2$-supermanifold. Moreover, again due to (\ref{CoordTransTB}), the algebra of $\Z_2^2$-superfunctions which are polynomial in the formal variables is well-defined. It can be identified with the algebra $\zW_D({\cM})$ of Deligne super differential forms on $\cM$. Since it is dense in the whole algebra of $\Z_2^2$-superfunctions of $\sss T\cM$, the latter can be identified with the corresponding completion $$\widehat{\zW}_{\op{D}}(\cM)=\prod_{k\ge 0}\w^k\zW^1_{\op{D}}(\cM)\,.$$

\begin{prop} The tangent bundle $\sss T\cM$ of a $\Z_2$-supermanifold $\cM=(M,\cA_M)$, interpreted as ringed space $(M,\widehat{\zW}_D(\cM))$, is a $\Z_2^2$-supermanifold.
\end{prop}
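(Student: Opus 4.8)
The plan is to verify the two defining properties of a $\Z_2^2$-supermanifold established in Section \ref{Z2nSuperMan}: that the relevant ringed space is locally $\Z_2^2$-ringed and that it is locally isomorphic to a $\Z_2^2$-superdomain. Since $\cM=(M,\cA_M)$ is a $\Z_2$-supermanifold, it is covered by coordinate charts with local coordinates $(x,\zx)$ on an open range $U\subset\R^p$, and these induce the adapted coordinates $(x,\zx,\dot x,\dot\zx)$ on $\sss T\cM$ transforming by (\ref{CoordTransTB}). First I would fix such a chart and declare the $\Z_2^2$-degrees of $(x,\zx,\dot x,\dot\zx)$ to be $((0,0),(0,1),(1,0),(1,1))$; the local model for the structure sheaf over an open $V\subset U$ is then $\Ci_U(V)[[\zx,\dot x,\dot\zx]]$, the formal power series in the three nonzero-degree formal variables with coefficients smooth in $x$. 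By Proposition \ref{prop:domlocality}, each such superdomain is a locally $\Z_2^2$-ringed space, so the stalks of $\widehat{\zW}_D(\cM)$ are automatically local rings once we know they agree locally with these superdomains.

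The crux is therefore to check that the transition data (\ref{CoordTransTB}) genuinely assemble into a $\Z_2^2$-atlas in the sense of Section \ref{Z2nSuperMan}, i.e.\ that the prescribed degree assignment is globally coherent and that the transformation rules define honest isomorphisms of locally $\Z_2^2$-ringed spaces satisfying the cocycle condition. The key observation is that in (\ref{CoordTransTB}) the base part $x'=x'(x,\zx)$, $\zx'=\zx'(x,\zx)$ involves only the zero- and $(0,1)$-degree coordinates, while the fibre part is \emph{linear} in $(\dot x,\dot\zx)$ with coefficients $\p_x x'$, $\p_\zx x'$, $\p_x\zx'$, $\p_\zx\zx'$ built from the base transformation. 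One must confirm that each resulting coordinate has a well-defined homogeneous $\Z_2^2$-degree: for instance $\dot x'=\dot x\,\p_x x'+\dot\zx\,\p_\zx x'$ must be of degree $(1,0)$, which forces $\dot x\,\p_x x'$ to have degree $(1,0)$ and $\dot\zx\,\p_\zx x'$ to also have degree $(1,0)$. Since $\dot\zx$ has degree $(1,1)$ and $\p_\zx x'$ lowers the $\Z_2$-parity (differentiating by the odd $\zx$ against the even $x'$), the factor $\p_\zx x'$ carries degree $(0,1)$, and $(1,1)+(0,1)=(1,0)$ as required; the analogous bookkeeping for $\dot\zx'$ must be carried out. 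This is the step I expect to be the main obstacle, not because it is deep but because one must be careful that the formal derivatives $\p_x,\p_\zx$ acting on a $\Z_2$-superfunction interact correctly with the $\Z_2^2$-degree assignment so that the pullback is genuinely a graded algebra morphism respecting (\ref{Z2nCommRule}); the remark following Example \ref{FundaExa2} already notes that the Jacobian is a degree-$0$ matrix, which is the content to be verified.

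Once coherence of the degrees is settled, the cocycle condition for the $\Z_2^2$-atlas reduces to the cocycle condition for the original $\Z_2$-atlas on $\cM$ together with the chain rule for the fibre transformations, exactly as stated in Example \ref{TB}; this is a routine verification requiring no new ideas. Finally, to identify the global structure sheaf with $\widehat{\zW}_D(\cM)$, I would argue that the subalgebra of $\Z_2^2$-superfunctions which are \emph{polynomial} in the formal variables is well-defined globally and coincides with the algebra $\zW_D(\cM)$ of Deligne differential forms, and then invoke Proposition \ref{CompletnessLoc}: the full sheaf of formal power series is the completion of the polynomial subalgebra with respect to the $J$-adic topology, yielding the identification $\widehat{\zW}_D(\cM)=\prod_{k\ge 0}\w^k\zW^1_D(\cM)$. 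This completes the verification that $(M,\widehat{\zW}_D(\cM))$ satisfies Definition \ref{DefZSupMan} for $n=2$, so it is a $\Z_2^2$-supermanifold.
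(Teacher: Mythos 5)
Your proposal is correct and follows essentially the same route as the paper, which establishes the proposition through the discussion in Example \ref{TB}: assign the bidegrees $((0,0),(0,1),(1,0),(1,1))$, check via (\ref{CoordTransTB}) that the transition maps are degree-coherent (the Jacobian being of degree $0$), reduce the cocycle condition to the chain rule, and identify the polynomial subalgebra with the Deligne forms $\zW_D(\cM)$ whose $J$-adic completion gives $\widehat{\zW}_D(\cM)$. Your explicit degree bookkeeping for $\dot x'$ and $\dot\zx'$ is the right verification and is consistent with what the paper asserts.
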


\end{example}

\section{$\Z_2^n$-superizations of $n$-fold vector bundles}
Let now
\begin{equation}\label{algebroid}
        {\xymatrix@R-1mm @C-1mm{ & E \ar[ld]_*{\zt_l} \ar[rd]^*{\zt_r} & \cr
        E_{10} \ar[rd]_*{\bar\zt_r} & E_{11} \ar[u] & E_{01} \ar[ld]^*{\bar\zt_l}  \cr & M  & }}
    \end{equation}
be a double vector bundle. This corresponds to a choice of two commuting Euler vector fields on $E$ \cite{GR09}. Alternatively, one can interpret a double vector bundle also as a locally trivial fiber bundle (not vector bundle, see (\ref{CoordTransDB})),
whose standard fiber is a graded vector space $V_{01}\oplus V_{10}\oplus V_{11}$, and whose coordinate
transformations are of the type
\bea\label{CoordTransDB}
x'&=&\zf(x)\,,\nonumber\\
\zx'&=&a(x)\zx\,,\nonumber\\
\zh'&=&b(x)\zh\,,\nonumber\\
\zy'&=&c(x)\zy+d(x)\zx\zh,
\eea
where $\zx,\zx'$ (resp., $\zh,\zh'$ and $\zy,\zy'$) are coordinates in $V_{01}$ (resp., $V_{10}$ and $V_{11}$), and where $x,x'$ are coordinates in the base \cite{Vor12}.
We now $\Z_2^2$-superize, assigning the $\Z_2^2$-degrees $((0,0),(0,1),(1,0),$ $(1,1))$ to the coordinates $(x,\zx,\zh,\zy)$ and $(x',\zx',\zh',\zy')$.

Notice that the coordinate changes respect the $\Z_2^2$-degree.
The coordinates $\zx$ and $\zh$ are odd, so when trying to obtain a superization into a standard supermanifold, we have to decide the order in the product $\zx\zh$, which is irrelevant for the (even) double vector bundle. Moreover, we should agree these orders so that
the cocycle condition is still satisfied. This can be done \cite{GR09,Vor12} for the price
of fixing some orders for consecutive parity changes with respect to all the vector bundle structures (see discussion in \cite{BGR15}).

It can easily be seen that the problem cannot arise in the present situation. Indeed, in the $\Z_2^2$-superization the coordinates $\zx$ and $\zh$ are odd with bidegrees $(0,1)$ and $(1,0)$, respectively, so they \textbf{commute} and the problem of ordering disappears.

All this can be repeated in the general case of an $n$-fold vector bundle $E$. Indeed, there is an $\N^n$-gradation in a dense subsheaf of the structure sheaf of $E$, given by the $n$ Euler vector fields, and an atlas in which all local coordinates are homogeneous and have degrees $\le \ao$ with respect to the lexicographical order, where $\ao=(1,\dots, 1)\in\{ 0,1\}^n$. As changes of coordinates respect the multidegree, all coordinate  changes are polynomial in nonzero degrees and all products which appear in these polynomials are products of coordinates with degrees having disjoint supports (otherwise  we would get degrees with some entries $>1$), therefore commuting in the $\Z_2^n$-setting (cf. \cite{GR09}). The problem of ordering disappears and the coordinate changes in the even $n$-fold vector bundle $E$ remain valid and consistent also for its $\Z_2^n$-superization. In other words, the degrees of local coordinates and their transformation laws in $E$ remain consistent also with the $\Z_2^n$-sign rules (\ref{Z2nCommRule}).
Thus we get the following.

\begin{prop} There exist a canonical $\Z_2^n$-superization of any $n$-fold vector bundle $E$ which gives a $\Z_2^n$-supermanifold $\zP E$ with the same homogeneous local coordinates, their degrees in $\Z_2^n$ and their transformations laws, but satisfying the new sign rules (\ref{Z2nCommRule}).\end{prop}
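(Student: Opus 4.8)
The plan is to build $\zP E$ through the atlas definition of a $\Z_2^n$-supermanifold, taking as underlying topological space the base manifold $M$ of the $n$-fold vector bundle $E$ and promoting every nonzero-degree fibre coordinate to a formal variable. First I would invoke the structure theory recalled in the discussion above: $E$ carries an atlas $(\cU_\za,\psi_\za)_\za$ whose local coordinates are homogeneous for the $\N^n$-grading defined by the $n$ commuting Euler vector fields, with multidegrees in $\{0,1\}^n$, i.e.\ $\le\ao$. The degree-$0$ coordinates are coordinates on $M$ with common open range $U_\za\subset\R^p$, $p=\dim M$; the remaining coordinates $\zx^1,\dots,\zx^q$ receive $\Z_2^n$-degrees through the inclusion $\{0,1\}^n\hookrightarrow\Z_2^n$, with $q_k$ of them carrying the $k$-th nonzero degree, so $\mathbf{q}=(q_1,\dots,q_{2^n-1})$. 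This pins down the candidate $\Z_2^n$-charts $\cU_\za=(U_\za,\Ci_{U_\za}[[\zx^1,\dots,\zx^q]])$ with the homeomorphisms $\psi_\za$ onto the corresponding opens of $M$.

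Next I would convert the coordinate changes of $E$ into coordinate transformations of these charts. Respecting the multidegree and the bound $\le\ao$, each change writes a new base coordinate as $x'^i=\zf^i(x)$ (cf.\ (\ref{CoordTransDB})) and a new nonzero-degree coordinate as a polynomial in the old nonzero-degree coordinates with smooth coefficients in the base coordinates; this is exactly the coordinate form (\ref{mor1}). By the morphism theory of Subsection \ref{MorphTheo}, each such system of pullbacks extends to a graded unital $\R$-algebra morphism of structure sheaves; the inverse coordinate change supplies the inverse morphism, so we obtain an isomorphism $\Psi_{\zb\za}=(\psi_{\zb\za},\psi^*_{\zb\za})$ of locally $\Z_2^n$-ringed spaces with underlying homeomorphism (\ref{UnderHomeo}).

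The heart of the matter, and the only place where the $\Z_2^n$-reinterpretation could fail, is to verify that reading these formulas under the sign rule (\ref{Z2nCommRule}) rather than under the purely even convention of the $n$-fold vector bundle changes nothing. This is the disjoint-support observation of the discussion above: any monomial in nonzero-degree coordinates occurring in a coordinate change is a product of coordinates whose $\Z_2^n$-degrees have pairwise disjoint supports, since otherwise the monomial would acquire a multidegree with an entry $>1$, violating $\le\ao$. Two degrees $s,s'\in\{0,1\}^n$ with disjoint supports satisfy $\langle s,s'\rangle=0$, so by (\ref{Z2nCommRule}) the corresponding factors commute; hence every product appearing in the transition formulas is order-independent and takes the same value whether computed in the even bundle $E$ or in the $\Z_2^n$-commutative algebra. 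In particular, the ordering ambiguity that obstructs the classical superization of double and $n$-fold vector bundles simply does not occur here, which I expect to be the decisive point of the proof.

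It remains to assemble the atlas. Because the transition formulas are homogeneous in multidegree, the $\Psi_{\zb\za}$ preserve the $\Z_2^n$-degree and are therefore bona fide coordinate transformations in the sense of Section \ref{Z2nSuperMan}. Since all relevant products commute, the cocycle identity $\Psi_{\zb\zg}\Psi_{\zg\za}=\Psi_{\zb\za}$ reduces to the same identity among the even coordinate changes of $E$ (essentially the chain rule, as in Example \ref{TB}), which holds there and hence persists verbatim after superization. Thus $(\cU_\za,\psi_\za)_\za$ together with the $(\Psi_{\zb\za})$ is a $\Z_2^n$-atlas over $M$, defining a $\Z_2^n$-supermanifold $\zP E$ of dimension $p|\mathbf{q}$ whose homogeneous local coordinates, their $\Z_2^n$-degrees, and their transformation laws coincide with those of $E$. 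Canonicity is then immediate, since the sole datum used is the intrinsic assignment of $\Z_2^n$-degrees through $\{0,1\}^n\hookrightarrow\Z_2^n$ provided by the Euler vector fields.
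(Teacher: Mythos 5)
Your proposal is correct and follows essentially the same route as the paper: you identify the homogeneous atlas with multidegrees bounded by $\ao$ coming from the $n$ commuting Euler vector fields, and you isolate the same decisive observation, namely that every product of nonzero-degree coordinates occurring in a transition formula involves degrees with pairwise disjoint supports, hence commuting factors under the sign rule (\ref{Z2nCommRule}), so that the even transition laws and their cocycle condition persist verbatim. The only difference is one of presentation: you spell out more explicitly than the paper how the coordinate-form data extend to isomorphisms of locally $\Z_2^n$-ringed spaces via the morphism theory of Subsection \ref{MorphTheo}, which the paper leaves implicit.
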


\section{Morphisms of $\Z_2^n$-supermanifolds}\label{mor}

\begin{remark} In contrast with integral calculus, most of the classical super differential calculus goes through in the $\Z_2^n$-graded context. Below we give the results along with some few explanations. The detailed proofs in the $\Z_2^n$-graded setting can be found in the preprint \cite{CGP1}.\end{remark}

\subsection{Embedding of the smooth base manifold}

\begin{prop} The base topological space $M$ of any $\Z_2^n$-supermanifold $\cM=(M,\cA_M)$ of dimension $p|\mathbf{q}$ carries a smooth manifold structure of dimension $p$, and there exists a short exact sequence of sheaves of $\Z_2^n$-commutative associative $\R$-algebras $$0\to {\cal J}_M\to \cA_M\stackrel{\ze}{\to }\Ci_M\to 0\;.$$\end{prop}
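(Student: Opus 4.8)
The plan is to construct the smooth structure on $M$ locally and then glue, using Corollary \ref{p1} and Proposition \ref{prop:domlocality} to guarantee that the gluing is compatible. First I would observe that, by definition, $M$ is covered by open sets $U_\za$ over which $\cA_M$ is isomorphic (as a sheaf of locally $\Z_2^n$-ringed spaces) to a $\Z_2^n$-superdomain $(\tilde U_\za, \Ci_{\tilde U_\za}[[\xi^1,\ldots,\xi^q]])$ with $\tilde U_\za\subset\R^p$ open. Each such local model comes with a canonical projection $\ze_\za:\cA_M(U_\za)\to\Ci_{\tilde U_\za}$ sending a formal power series to its independent term $f_0$, i.e.\ the kernel is the ideal $\cJ_M(U_\za)$ of $\Z_2^n$-functions with vanishing zero-order term. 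The underlying homeomorphisms $\psi_\za$ equip $M$ with coordinate charts into $\R^p$; the content of the proposition is that the transition maps between these charts are smooth.

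The key step is to show the transition maps are smooth. A coordinate transformation $\Psi_{\zb\za}=(\psi_{\zb\za},\psi^*_{\zb\za})$ is an isomorphism of locally $\Z_2^n$-ringed spaces, hence $\psi^*_{\zb\za}$ respects the maximal homogeneous ideals $\frak m_x$ described in Proposition \ref{prop:domlocality}. Evaluating at a point $x$, the induced stalk map must send $\frak m_{\psi_{\zb\za}(x)}$ into $\frak m_x$, which forces it to be compatible with the projections $\ze$ onto base functions. Concretely, from the coordinate form (\ref{mor1}) the pullback of a zero-degree coordinate $x'^i$ is $\zf^i(x)+j^i(x,\xi)$ with $j^i\in\cJ$, so passing to independent terms gives $\ze(\psi^*_{\zb\za}(x'^i))=\zf^i(x)$, and the $\zf^i$ are smooth by the very shape of the morphism. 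Thus the induced map on base points is exactly the smooth map $\zf=\psi_{\zb\za}$, and the cocycle condition on the $\Z_2^n$-charts projects to the cocycle condition for the $\zf$'s. This furnishes $M$ with a smooth atlas of dimension $p$.

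For the short exact sequence, I would define $\cJ_M$ as the subsheaf whose sections over an open $V$ are those $\Z_2^n$-functions with vanishing independent term, and $\ze:\cA_M\to\Ci_M$ as the sheaf morphism given locally by $f\mapsto f_0$. Surjectivity of $\ze$ and exactness at $\cA_M$ are immediate from the local description (\ref{pf1}): every smooth function on the base extends (locally) to a $\Z_2^n$-function with no higher-order terms, and the kernel is $\cJ_M$ by construction. The only thing requiring care is that these local definitions glue to global sheaf morphisms, which is precisely where one invokes that the coordinate transformations are morphisms of locally $\Z_2^n$-ringed spaces: since each $\psi^*_{\zb\za}$ sends $\cJ$ to $\cJ$ (again by (\ref{mor1}), as all pullbacks of nonzero-degree coordinates lie in $\cJ$ and pullbacks of zero-degree coordinates differ from base functions by elements of $\cJ$), the projections $\ze_\za$ agree on overlaps and assemble into a well-defined global $\ze$.

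The main obstacle I anticipate is verifying the well-definedness of the smooth structure, that is, that the collection of base maps $\zf=\psi_{\zb\za}$ are genuinely smooth and satisfy the cocycle condition as smooth maps, rather than merely as continuous ones. The subtle point is that smoothness of the $\zf^i$ is not an extra hypothesis but must be extracted from the requirement that $\psi^*_{\zb\za}$ be an algebra morphism compatible with the $\Ci$-module structure on the base; one has to check that composition with $\Ci$ base functions is respected, so that the $\zf^i$ pull back smooth functions to smooth functions and are therefore smooth. The nilpotency arguments of classical Supergeometry are unavailable here, but Corollary \ref{p1} (invertibility is detected on the independent term) supplies the needed substitute, ensuring that the base map is a local diffeomorphism wherever $\Psi_{\zb\za}$ is an isomorphism. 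Once this is established, the remaining verifications are routine.
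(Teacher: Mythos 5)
Your proof is correct, but it takes a more chart-based route than the paper. The paper defines $\ze$ \emph{globally and invariantly} in one stroke: by Corollary \ref{p1}, for any $f\in\cA_M(U)$ and any $m\in U$ there is a unique $k\in\R$ such that $f-k$ is non-invertible in every neighborhood of $m$, and one sets $\ze_U(f)(m)=k$, $\cJ_M=\ker\ze$. Since this definition never mentions a chart, there is nothing to glue; the image presheaf of $\ze$ then generates a sheaf locally isomorphic to $\Ci_{\R^p}$, which is the smooth structure. You instead define $\ze$ chart-by-chart as ``take the independent term'' and must verify agreement on overlaps, which you correctly reduce to the fact that transition maps are morphisms of \emph{locally} $\Z_2^n$-ringed spaces (preservation of the maximal ideals of Proposition \ref{prop:domlocality} forces $\ze\circ\psi^*=\psi^*\circ\ze$); this is essentially a special case of the paper's later Proposition \ref{commepspsi}, which in the paper is itself deduced from the invariant definition of $\ze$. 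Two small remarks: your worry about extracting smoothness of the transition maps is less delicate than you suggest, since the independent term of any element of $\Ci(V)[[\xi^1,\ldots,\xi^q]]$ is a smooth function of $x$ by construction --- the only real content is identifying that independent term with the underlying continuous base map, which your maximal-ideal argument does; and you should avoid invoking the coordinate form (\ref{mor1}) as if it were already established for arbitrary morphisms (at that stage of the paper only the existence direction is proved), though for the degree-$0$ coordinates the decomposition into independent term plus an element of $\cJ$ is automatic and needs no appeal to (\ref{mor1}). With those caveats, both arguments are sound; the paper's buys economy (no gluing step), yours makes the role of the locally-ringed-space axiom more explicit.
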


\begin{proof}[Sketch of proof] In view of Corollary \ref{p1}, there exists, for any open $U\subset M$, for any $f\in\cA_M(U)$ and any $m\in U$, a unique $k\in\R$ such that $f-k$ is not invertible, in any neighborhood of $m$. We set \be\label{DefBaseProj}\ze_U(f)(m)=k\ee and $\cJ_M=\ker\ze$. The presheaf $\op{im}\ze$ generates a sheaf $\frak{F}_M$ which is locally isomorphic to $\Ci_{\R^p}$ and thus implements a $p$-dimensional smooth manifold structure on $M$ such that $\Ci_M\simeq \frak{F}_M$.\end{proof}

The description of the basis of neighborhoods of 0 in the $\cJ_M(U)$-adic topology of $\cA_M(U)$ in a $\Z_2^n$ coordinate domain $U$ is clear from Subsection \ref{ComplDom}.

\subsection{Continuity of morphisms}\label{ContMorph}
\newcommand{\cB}{{\cal B}}

\begin{definition} A \emph{morphism of $\Z_2^n$-supermanifolds} or \emph{$\Z_2^n$-morphism} is a morphism of the underlying locally $\Z_2^n$-ringed spaces.\end{definition}

The next commutation property is fundamental. It is a consequence of the definition (\ref{DefBaseProj}).

\begin{prop} \label{commepspsi}
Let $$ \Psi=(\psi,\psi^*) : {\cal M} =(M,\cA_M)  \to  {\cal N} =(N,\cB_N) $$ be a morphism of $\Z_2^n$-supermanifolds, let $V\subset N$ be open, and $U=\psi^{-1}(V)$. Then,
\be\label{ComMorpBaseProj} \ze_{U} \circ \psi^{*}_V = \psi^{*}_V \circ \ze_{V}\;, \ee where the {\small LHS} pullback of $\Z_2^n$-functions is given by the second component of $\Psi$ and where the {\small RHS} pullback of classical functions is equal to $- \circ \psi|_U $ and thus given by the first component of $\Psi$.\end{prop}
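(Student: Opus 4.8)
The plan is to unwind both sides of the claimed identity (\ref{ComMorpBaseProj}) by evaluating them on an arbitrary $\Z_2^n$-function and at an arbitrary point, and to show the two results agree using the characterization of $\ze$ through invertibility furnished by Corollary \ref{p1}. Fix an open $V\subset N$, put $U=\psi^{-1}(V)$, take $g\in\cB_N(V)$ and a point $m\in U$. Write $V'$ for a small neighborhood of $\psi(m)$ inside $V$ and $U'=\psi^{-1}(V')\ni m$, so that all germs below are taken at $m$ or at $\psi(m)$.

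First I would compute the {\small RHS}. By the definition (\ref{DefBaseProj}), $\ze_V(g)(\psi(m))=k$, where $k\in\R$ is the unique scalar such that $g-k$ is non-invertible in every neighborhood of $\psi(m)$; equivalently, by Corollary \ref{p1}, $k$ is the value at $\psi(m)$ of the independent term $g_0$ of $g$. Since the {\small RHS} pullback of classical functions is precomposition $-\circ\psi|_U$, we obtain $(\psi^*_V\circ\ze_V)(g)(m)=\ze_V(g)(\psi(m))=k$.

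Next I would compute the {\small LHS}. Apply $\psi^*_V$ to get $f:=\psi^*_V(g)\in\cA_M(U)$, and evaluate $\ze_U(f)(m)$. Again by (\ref{DefBaseProj}) this is the unique $k'\in\R$ with $f-k'$ non-invertible near $m$. The key step is to show $k'=k$. The idea is that $\psi^*_V$ is a unital $\R$-algebra morphism of locally $\Z_2^n$-ringed spaces, hence it preserves the unit, commutes with scalars, and—crucially—the induced stalk map sends the maximal ideal $\frak m_{\psi(m)}$ into $\frak m_m$ by the very definition of a morphism of \emph{locally} $\Z_2^n$-ringed spaces. Now $g-k$ lies in $\frak m_{\psi(m)}$ (its independent term vanishes at $\psi(m)$ by Proposition \ref{prop:domlocality}), so $\psi^*_V(g-k)=f-k$ lies in $\frak m_m$, i.e.\ $f-k$ is non-invertible near $m$; by uniqueness of the scalar in (\ref{DefBaseProj}) this forces $k'=k$. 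Thus $(\ze_U\circ\psi^*_V)(g)(m)=k=(\psi^*_V\circ\ze_V)(g)(m)$, and since $g$ and $m$ were arbitrary, (\ref{ComMorpBaseProj}) follows.

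The main obstacle is the bookkeeping of locality: I must be careful that $\ze$ is characterized \emph{stalkwise} (non-invertibility in \emph{every} neighborhood of the point), so that the pointwise value $k$ really is detected by the maximal ideal of the stalk, and that the algebra morphism $\psi^*$ genuinely respects these maximal ideals—this is exactly the defining condition for a morphism in $\tt LZRS$, and it is where the word \emph{locally} earns its keep. Everything else is the routine verification that $\psi^*$ is unital and $\R$-linear, which lets scalars pass through untouched; once locality is invoked correctly, the equality $k'=k$ is immediate and the rest is formal.
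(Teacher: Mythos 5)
Your argument is correct, and it follows the route the paper intends: unwind the definition (\ref{DefBaseProj}) of $\ze$ at a point, identify the scalar $k=\ze_V(g)(\psi(m))$ with the value of the independent term via Corollary \ref{p1}, and use uniqueness of the scalar to conclude. The bookkeeping of locality (germ-level non-invertibility, passage to the local model via Proposition \ref{prop:domlocality}) is handled properly, and there is no circularity with (\ref{MaxIdea}), since you characterize $\frak{m}_{\psi(m)}$ through the local model rather than through the corollary that the paper derives \emph{from} this proposition.

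The one genuine divergence is in the key step. You establish $k'=k$ by pushing $g-k$ through the stalk map and invoking the clause in the definition of a morphism in $\tt LZRS$ that $\psi^*_m(\frak{m}_{\psi(m)})\subset\frak{m}_m$. That is legitimate under the stated definition, but note that the paper later asserts (in the discussion around Corollary \ref{Claim1}) that this very clause is \emph{redundant} and is itself a consequence of Proposition \ref{commepspsi}; a proof of the proposition that leans on the clause therefore cannot support that redundancy claim. The alternative, which uses only that $\psi^*$ is a unital $\R$-algebra morphism, runs the implication in the other direction: for any $c\neq k$, the independent term $g_0-c$ is nonvanishing near $\psi(m)$, so $g-c$ is invertible on a neighborhood $V'$ of $\psi(m)$ by Corollary \ref{p1}; since algebra morphisms preserve units, $\psi^*_V(g)-c$ is invertible on $\psi^{-1}(V')\ni m$, whence $\ze_U(\psi^*_V g)(m)\neq c$. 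Uniqueness in (\ref{DefBaseProj}) then forces $k'=k$ without any appeal to maximal ideals. What your version buys is immediacy (one application of locality and you are done); what the unit-preservation version buys is that the proposition, and hence Corollary \ref{Claim1}, holds for arbitrary morphisms of $\Z_2^n$-ringed spaces, which is exactly the generality the paper exploits. I would recommend swapping in the contrapositive step, or at least flagging that your use of locality can be eliminated.
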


\begin{cor} For any $\Z_2^n$-supermanifold ${\cal M} =(M,\cA_M)$ and any point $m\in M$, the unique maximal homogeneous ideal ${\frak m}_m$ of the stalk $\cA_m$ is given by \be\label{MaxIdea}{\frak m}_m=\{[f]_m:(\ze f)(m)=0\}\;.\ee\end{cor}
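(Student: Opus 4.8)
The plan is to identify the maximal homogeneous ideal of the stalk $\cA_m$ by transporting the local model via Proposition \ref{prop:domlocality}, and then to reexpress the defining condition in terms of the base projection $\ze$. Since $\cM$ is a $\Z_2^n$-supermanifold, there is a neighborhood $U\ni m$ and an isomorphism of locally $\Z_2^n$-ringed spaces onto a $\Z_2^n$-superdomain. Passing to stalks, this induces an isomorphism of local $\R$-algebras $\cA_m\cong \Ci_{\R^p,x}[[\xi^1,\ldots,\xi^q]]$ (where $x$ is the image of $m$) that respects the $\Z_2^n$-grading and carries the unique maximal homogeneous ideal to the one described in (\ref{MaxIdSupDom}). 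In particular, $\cA_m$ is local with a unique maximal homogeneous ideal $\frak m_m$, consisting precisely of the noninvertible homogeneous germs together with their sums.

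First I would recall that Corollary \ref{p1} (valid in any $\Z_2^n$-coordinate domain containing $m$) characterizes invertibility of a germ $[f]_m$ through invertibility of its independent term $f_0$ in $\Ci_U$ near $m$. A germ $[f]_m$ lies in the unique maximal homogeneous ideal exactly when it is not invertible, that is, when $f_0$ is not invertible in any neighborhood of $m$, which for a smooth real-valued function means $f_0(m)=0$. Thus $\frak m_m=\{[f]_m : f_0(m)=0\}$ in the local model. The remaining task is to show that the local quantity $f_0(m)$ agrees with $(\ze f)(m)$, so that the description is coordinate-free and matches (\ref{MaxIdea}).

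The key step is therefore to verify that the intrinsic base projection $\ze$ of Proposition on the embedding of the base, defined through the unique scalar $k$ with $f-k$ noninvertible near $m$ as in (\ref{DefBaseProj}), computes exactly the independent term evaluated at $m$ in any coordinate chart. This is immediate from Corollary \ref{p1}: given $[f]_m$ and $k\in\R$, the germ $f-k$ is noninvertible near $m$ if and only if its independent term $f_0-k$ vanishes at $m$, i.e. $k=f_0(m)$; by uniqueness of $k$, this forces $(\ze_U f)(m)=f_0(m)$. Substituting this identity into the local description $\{[f]_m : f_0(m)=0\}$ yields the asserted form $\frak m_m=\{[f]_m:(\ze f)(m)=0\}$.

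I expect the only subtlety — rather than a genuine obstacle — to be the bookkeeping needed to confirm that the chart isomorphism carries the abstractly defined maximal homogeneous ideal to the concrete one of (\ref{MaxIdSupDom}), and that $\ze$ is well defined independently of the chosen chart. Both points are handled by the locality established in Proposition \ref{prop:domlocality} and by the intrinsic definition (\ref{DefBaseProj}) of $\ze$, which does not reference any coordinates; the computation $(\ze f)(m)=f_0(m)$ then shows the two descriptions coincide in each chart and hence globally.
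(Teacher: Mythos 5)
Your proposal is correct and follows essentially the same route as the paper: transport the maximal homogeneous ideal (\ref{MaxIdSupDom}) of the local model through the chart isomorphism on stalks, then rewrite the condition $f_0(m)=0$ intrinsically as $(\ze f)(m)=0$. The only cosmetic difference is that you verify $(\ze f)(m)=f_0(m)$ directly from the definition (\ref{DefBaseProj}) and Corollary \ref{p1}, whereas the paper cites Proposition \ref{commepspsi} for this identification --- which is itself just a consequence of that same definition.
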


The result is easily proven. However, actually we need not assume that a $\Z_2^n$-supermanifold is a locally $\Z_2^n$-ringed space. If we omit this requirement, the local isomorphism $\zF=(\zvf,\zvf^*)$ from the $\Z_2^n$-supermanifold to its local model is only an isomorphism in $\Z_2^n$-ringed spaces. Nevertheless, since the induced morphisms $\zvf^*_m$ between stalks are isomorphisms of graded unital $\R$-algebras, $\zvf^*_m({\frak m}_x)$, where $x=\zvf(m)$ and $\frak m_x$ is the unique maximal homogeneous ideal (\ref{MaxIdSupDom}) of the stalk at $x$ of the local model, \emph{is} the unique maximal homogeneous ideal of $\cA_m$. It then follows from Proposition \ref{commepspsi} that it is given by (\ref{MaxIdea}).\medskip

We can choose a centered chart $(x,\xi)$ around $m$ and work in a $\Z_2^n$-superdomain over a convex open subset. In view of (\ref{MaxIdea}), a Taylor expansion (with remainder) around $m\simeq x=0$ of the coordinate form of $\ze f$ shows that $${\frak m}_m\simeq \{[f]_0: f(x,\xi) = 0(x) + \sum_{|\zm|>0}f_\zm(x)\xi^\zm \}\;,$$ where $0(x)$ are terms of degree at least 1 in $x$. More generally:

\begin{lem}\label{Claim4} For any $m\in M$, the basis of neighborhoods of $\,0$ in the ${\frak m}_m$-adic topology of $\cA_m$ is given by \be{\frak m}_m^{k+1}=\{[f]_0: f(x,\xi) = \sum_{0\le|\zm|\le k}0_\zm(x^{k-|\zm|+1})\xi^\zm+\sum_{|\zm|>k}f_\zm(x)\xi^\zm\}\quad(k\ge 0)\;,\label{lc2}\ee where notation is the same as above. \end{lem}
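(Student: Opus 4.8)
The plan is to characterize the powers $\mathfrak{m}_m^{k+1}$ explicitly by first understanding $\mathfrak{m}_m$ itself and then computing products. From the discussion immediately preceding the lemma, we already have the description $\mathfrak{m}_m\simeq\{[f]_0 : f(x,\xi)=0(x)+\sum_{|\mu|>0}f_\mu(x)\xi^\mu\}$, where $0(x)$ denotes a smooth function of $x$ alone that vanishes to first order at $x=0$ (equivalently, a function in the maximal ideal of $\Ci_{\R^p,0}$). So $\mathfrak{m}_m$ is generated, as an ideal of the stalk $\cA_m\simeq\Ci_{\R^p,0}[[\xi^1,\ldots,\xi^q]]$, by the zero-degree generators $x^1,\ldots,x^p$ together with all the formal variables $\xi^1,\ldots,\xi^q$. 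The claim \eqref{lc2} then amounts to: a germ lies in $\mathfrak{m}_m^{k+1}$ if and only if its coefficient of $\xi^\mu$, for each multiindex $\mu$ with $|\mu|\le k$, vanishes to order at least $k-|\mu|+1$ in $x$, while the coefficients for $|\mu|>k$ are unconstrained.

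First I would set up the induction on $k$, the base case $k=0$ being exactly the description of $\mathfrak{m}_m$ recalled above. For the inductive step, I would use $\mathfrak{m}_m^{k+1}=\mathfrak{m}_m\cdot\mathfrak{m}_m^{k}$ and simply multiply a general element of $\mathfrak{m}_m$ (a germ with constant term zero and first-order-vanishing $x$-part) by a general element of $\mathfrak{m}_m^{k}$ (which by induction has the form in \eqref{lc2} with $k$ replaced by $k-1$). The cleanest route is to work generator by generator: since $\mathfrak{m}_m$ is generated by the $x^i$ and the $\xi^a$, it suffices to see how multiplication by each generator shifts the described filtration. Multiplying by $x^i$ raises the $x$-vanishing order of every coefficient by one and does not change the $\xi$-monomial; multiplying by $\xi^a$ raises $|\mu|$ by one, which lowers the required $x$-order $k-|\mu|+1$ by one — exactly compensating. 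A bookkeeping check then shows that the product of the generators with $\mathfrak{m}_m^k$ lands in the set on the right-hand side of \eqref{lc2}, giving the inclusion $\mathfrak{m}_m^{k+1}\subseteq\{\ldots\}$.

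For the reverse inclusion, I would show that any germ of the stated form can actually be written as a sum of products of $k+1$ elements of $\mathfrak{m}_m$. Given $f$ whose coefficient $f_\mu(x)$ vanishes to order $\ge k-|\mu|+1$ for each $|\mu|\le k$, I would, term by monomial term, factor $\xi^\mu$ into $|\mu|$ of the generators $\xi^a$ and use the classical fact (Taylor's theorem / Hadamard's lemma at $0$) that a smooth germ vanishing to order $k-|\mu|+1$ in $x$ is a finite sum of products of $k-|\mu|+1$ germs each vanishing at $0$, i.e. each lying in the maximal ideal of $\Ci_{\R^p,0}$. Concatenating these $|\mu|$ odd/nonzero-degree factors with the $k-|\mu|+1$ smooth factors yields a decomposition into $k+1$ factors from $\mathfrak{m}_m$; terms with $|\mu|>k$ already contain more than $k$ formal generators and are handled the same way. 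Summing over monomials expresses $f$ as an element of $\mathfrak{m}_m^{k+1}$.

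The main obstacle I anticipate is the careful handling of the smooth part via Hadamard's lemma: I must confirm that a smooth germ vanishing to order $N$ at $0\in\R^p$ genuinely factors into $N$ germs from the maximal ideal (this is standard but requires the multivariable Taylor expansion with integral remainder and an induction on $N$), and I must ensure the combinatorics of splitting $k-|\mu|+1$ against $|\mu|$ add up correctly to $k+1$ in every case, including the boundary case $|\mu|=k$ where no $x$-vanishing is needed beyond first order. A secondary subtlety is that $\cA_m$ consists of \emph{formal} power series, so I should note that all the manipulations are coefficient-wise and that only finitely many factorization steps per monomial are involved; the formal completeness established in Proposition \ref{CompletnessLoc} guarantees the resulting series lies in $\cA_m$ and that the $\mathfrak{m}_m$-adic filtration behaves as claimed. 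Once these two points are nailed down, both inclusions follow and the lemma is proved.
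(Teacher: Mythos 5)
Your overall strategy matches what the paper has in mind: the text only sketches the case $k=0$ via a Taylor expansion with remainder in a centered chart and defers the general case to the preprint \cite{CGP1}, and your elaboration --- $\mathfrak{m}_m$ is generated by $x^1,\dots,x^p,\xi^1,\dots,\xi^q$, the forward inclusion by generator-wise bookkeeping, the reverse inclusion by Hadamard's lemma --- is the natural way to fill that in. Your exponent bookkeeping is also right: a product of $a$ germs vanishing at $x=0$ and $b$ formal generators, with $a+b=k+1$, contributes to the coefficient of $\xi^\lambda$ only for $|\lambda|\ge b$, and that contribution vanishes to order $a=k+1-b\ge k-|\lambda|+1$ in $x$, which is exactly the condition in (\ref{lc2}).

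There is, however, one step you wave at but do not actually close, and it is the only genuinely delicate point of the lemma: membership in the ideal power $\mathfrak{m}_m^{k+1}$ means being a \emph{finite} sum of products of $k+1$ elements of $\mathfrak{m}_m$, whereas your reverse inclusion produces one factorization per $\xi$-monomial of $f$, i.e.\ an infinite formal sum. Invoking Proposition \ref{CompletnessLoc} only tells you that this sum converges in $\cA_m$; it does not tell you it lies in $\mathfrak{m}_m^{k+1}$, since the closure of an ideal power in the adic topology can a priori be strictly larger than the ideal power itself. The fix is a regrouping: the head $\sum_{0\le|\mu|\le k}f_\mu\xi^\mu$ is a finite sum and each term is handled exactly as you say, while the tail $\sum_{|\mu|>k}f_\mu\xi^\mu$ must be rewritten as a \emph{finite} sum $\sum_{|\nu|=k+1}\xi^\nu g_\nu(x,\xi)$ by assigning to each multiindex $\mu$ with $|\mu|\ge k+1$ a chosen submultiindex $\nu\le\mu$ with $|\nu|=k+1$ and collecting, with the signs dictated by the $\Z_2^n$-commutation rule, all corresponding terms into $g_\nu$; each of these finitely many summands then visibly lies in $(\xi^1,\dots,\xi^q)^{k+1}\subset\mathfrak{m}_m^{k+1}$. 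The same regrouping is needed, in milder form, when you write a general element of $\mathfrak{m}_m$ as $\sum_i x^iu_i+\sum_a\xi^av_a$ to run the generator-by-generator argument in the forward inclusion. With this point added your argument is complete.
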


The next result is a consequence of the definition $\cJ=\ker\ze$, Equation (\ref{MaxIdea}), and Proposition \ref{commepspsi}. It shows in particular that $\Z_2^n$-morphisms automatically respect maximal ideals, so that this requirement is actually redundant in their definition.

\begin{cor}\label{Claim1} Any $\Z_2^n$-morphism $\Psi=(\psi,\psi^*):\cM=(M,\cA_M)\to \cN=(N,\cB_N)$ is continuous with respect to $\cal J$ and $\frak m$, i.e., for any open $V\subset N$ and any $m\in M$, we have
$$ \psi^{*}_V\lp \cJ_N(V) \rp \subset \cJ_M({\psi^{-1}(V)})\;\;\text{and}\;\; \psi^{*}_{m}\lp \frak{m}_{\psi(m)}\rp \subset \frak{m}_{m}\;.\label{stalkcond}$$
\end{cor}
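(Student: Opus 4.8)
The plan is to prove the two inclusions in Corollary \ref{Claim1} directly from the commutation property \eqref{ComMorpBaseProj} together with the characterization \eqref{MaxIdea} of the maximal ideal. These are essentially formal consequences of what has already been established, so the proof should be short.

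First I would handle the sheaf-level inclusion $\psi^{*}_V(\cJ_N(V))\subset \cJ_M(\psi^{-1}(V))$. Let $f\in\cJ_N(V)=\ker\ze_V$, so $\ze_V(f)=0$. Applying Proposition \ref{commepspsi}, namely $\ze_U\circ\psi^*_V=\psi^*_V\circ\ze_V$ with $U=\psi^{-1}(V)$, I get
\[
\ze_U(\psi^*_V(f))=\psi^*_V(\ze_V(f))=\psi^*_V(0)=0\;,
\]
so that $\psi^*_V(f)\in\ker\ze_U=\cJ_M(U)$, which is exactly the first inclusion. This step is immediate once one reads off that the right-hand pullback $\psi^*_V$ in \eqref{ComMorpBaseProj} is the ordinary composition $-\circ\psi|_U$ and hence sends the zero function to zero.

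Next I would pass to stalks to obtain $\psi^*_m(\frak m_{\psi(m)})\subset\frak m_m$. Fix $m\in M$ and set $x=\psi(m)$. By the preceding Corollary, $\frak m_x=\{[g]_x:(\ze g)(x)=0\}$. Take a germ $[g]_x\in\frak m_x$, represented by $g\in\cB_N(V)$ for some open $V\ni x$ with $(\ze_V g)(x)=0$. The induced stalk map sends $[g]_x$ to $[\psi^*_V g]_m$. Using \eqref{ComMorpBaseProj} again, evaluated at $m$,
\[
(\ze_U(\psi^*_V g))(m)=(\psi^*_V(\ze_V g))(m)=(\ze_V g)(\psi(m))=(\ze_V g)(x)=0\;,
\]
so $[\psi^*_V g]_m$ lies in $\{[f]_m:(\ze f)(m)=0\}=\frak m_m$ by \eqref{MaxIdea}. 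This gives the second inclusion and shows moreover that the stalk map automatically respects the maximal ideals, which is the remark preceding the corollary.

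The only point demanding a little care—and what I expect to be the main obstacle—is the bookkeeping between the presheaf identity \eqref{ComMorpBaseProj} and its pointwise/stalk consequences: one must check that $\ze$ commutes with restriction to smaller opens so that the equalities remain valid after passing to germs at $m$, and that the two different meanings of $\psi^*$ (pullback of $\Z_2^n$-functions on the left, pullback of classical smooth functions on the right) are correctly matched. Since $\ze$ is a morphism of sheaves and restrictions commute with it by naturality, this is routine, and no further nilpotency or convergence argument is needed here. Hence the corollary follows.
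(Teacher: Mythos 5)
Your proof is correct and follows exactly the route the paper indicates: the corollary is stated there as a direct consequence of $\cJ=\ker\ze$, the characterization (\ref{MaxIdea}) of $\frak{m}_m$, and the commutation property (\ref{ComMorpBaseProj}), which is precisely how you derive both inclusions. No issues.
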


\begin{cor} The base map $\psi:M\to N$ of any $\Z_2^n$-morphism $\Psi:(M,\cA_M)\to (N,\cB_N)$ is smooth.\end{cor}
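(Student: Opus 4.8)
The plan is to deduce smoothness of $\psi$ from the commutation identity (\ref{ComMorpBaseProj}) of Proposition \ref{commepspsi}, exploiting that the smooth structures on $M$ and $N$ were constructed precisely so that their local smooth functions are the base projections $\ze(f)$. Recall that a continuous map between smooth manifolds is smooth if and only if it pulls smooth functions back to smooth functions; since the smooth sheaf $\Ci_N$ is (locally) identified with $\op{im}\ze_N$, it therefore suffices to check that $h\circ\psi$ is smooth for every locally defined smooth $h$ on $N$.

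First I would fix an open $V\subset N$, set $U=\psi^{-1}(V)$, and take an arbitrary $h\in\Ci_N(V)$. By the short exact sequence $0\to\cJ_N\to\cB_N\stackrel{\ze}{\to}\Ci_N\to 0$ established for the base of $\cN$, the map $\ze_V$ is surjective, so $h=\ze_V(f)$ for some $\Z_2^n$-function $f\in\cB_N(V)$. Applying Proposition \ref{commepspsi} then gives
\[
h\circ\psi|_U=\ze_V(f)\circ\psi|_U=\ze_U\bigl(\psi^*_V(f)\bigr)\,,
\]
and the right-hand side is smooth simply because $\psi^*_V(f)\in\cA_M(U)$ while $\ze_U$ takes values in $\Ci_M(U)$. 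As $h$ was arbitrary, $\psi$ pulls back smooth functions to smooth functions, and is hence smooth.

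For concreteness, and as a cross-check, I would also read off the base map in coordinates: choosing $\Z_2^n$-charts around $m$ and $\psi(m)$, the coordinate form (\ref{mor1}) shows that the zero-degree pullbacks are $\psi^*(x'^i)=\zf^i(x)+j^i(x,\xi)$ with $j^i\in\cJ$, whence $\ze(\psi^*(x'^i))=\zf^i(x)$. Thus in these charts $\psi$ is literally $x\mapsto(\zf^1(x),\dots,\zf^{p'}(x))$, i.e. the $\zf^i$ are the base projections of the pullbacks of the target coordinates, and these are manifestly smooth maps of the Euclidean bases.

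I do not expect a genuine obstacle here; the single point that must be handled with care is conceptual rather than technical, namely the recognition that the smooth structure on each base was \emph{defined} (in the base-embedding proposition) exactly so that $\Ci\simeq\op{im}\ze$, so that the commutation square of Proposition \ref{commepspsi} transfers smoothness across $\psi^*$ with no analysis required. The only facts to confirm along the way are that $\ze_U$ indeed lands in $\Ci_M(U)$ and that $\ze_V$ is onto $\Ci_N(V)$, both of which are already contained in that proposition.
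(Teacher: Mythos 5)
Your argument is correct and follows exactly the route the paper intends: the corollary is a direct consequence of the commutation identity $\ze_{U}\circ\psi^{*}_{V}=\psi^{*}_{V}\circ\ze_{V}$ of Proposition \ref{commepspsi} together with the surjectivity of $\ze$ and the fact that the smooth structure on each base is defined via $\op{im}\ze$. The coordinate cross-check via (\ref{mor1}) is consistent with this and adds nothing that needs further justification.
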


\subsection{Completeness of the $\Z_2^n$-function sheaf and the $\Z_2^n$-function algebras}

The decreasing filtration $\cA\supset \cJ\supset \cJ^2\supset\ldots$ of the structure sheaf of a $\Z_2^n$-supermanifold $\cM=(M,\cA)$ gives rise to an inverse system $$\cA/\cJ\leftarrow \cA/\cJ^2\leftarrow \cA/\cJ^3\leftarrow\ldots $$ of sheaves of algebras (the quotient presheaves $\cA/\cJ^k$ are actually sheaves due to the existence of partitions of unity on $\cM$). Since a limit is a universal cone, there exists a sheaf morphism $\varprojlim_k\cA/\cJ^k\leftarrow \cA$. Moreover, as a limit in a category of sheaves is just the corresponding limit in the category of presheaves (which is computed objectwise), we get, for any $\Z_2^n$ chart domain $U_\za$, $$\lp \varprojlim_k\cA/\cJ^k \rp (U_\za)=\varprojlim_k\cA(U_\za)/\cJ^k(U_\za)\simeq \cA(U_\za)\;,$$ see Equation (\ref{Completeness}). It follows that $$\varprojlim_k\cA/\cJ^k\simeq\cA\;$$ in the category of sheaves. % Note that $\cJ^k$ is the extension of a $B$-sheaf.
Since isomorphic sheaves have isomorphic sections, we thus obtain, for any $U\subset M$, $$\varprojlim_k\cA(U)/\cJ^k(U)\simeq\cA(U)\;.$$

\begin{prop}\label{Hausdorff} The $\Z_2^n$-function sheaf $\cA_M$ (resp., the $\Z_2^n$-function algebra $\cA_M(U),$ $U\subset M$) of a $\Z_2^n$-supermanifold $(M,\cA_M)$ is Hausdorff-complete with respect to the $\cJ_M$-adic (resp., $\cJ_M(U)$-adic) topology.\end{prop}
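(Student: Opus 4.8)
The plan is to read Hausdorff-completeness straight off the inverse-limit identification assembled in the discussion preceding the statement. By definition, an algebra $A$ filtered by $A\supset\cJ\supset\cJ^2\supset\cdots$ is Hausdorff-complete for the $\cJ$-adic topology exactly when the canonical cone map $A\to\varprojlim_k A/\cJ^k$ is an isomorphism: injectivity is the separation (Hausdorff) property $\bigcap_k\cJ^k=0$, and surjectivity is completeness. So I would reduce the entire proposition to showing that the canonical map $\cA\to\varprojlim_k\cA/\cJ^k$ is an isomorphism, first as sheaves and then on sections.

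First I would dispose of the sheaf statement. Since $\cM$ is locally isomorphic to a $\Z_2^n$-superdomain, on every chart domain $U_\za$ the algebra $\cA(U_\za)$ is isomorphic to a formal power series algebra $\cO$, for which Equation (\ref{Completeness}) gives $\varprojlim_\ell\cO/J^\ell=\cO$ with $J=\cO_1$. Hence the canonical map is an isomorphism on the basis of chart domains, and a sheaf morphism that is an isomorphism on a basis is an isomorphism; this yields $\cA_M\simeq\varprojlim_k\cA_M/\cJ_M^k$, which is precisely the asserted Hausdorff-completeness of the sheaf $\cA_M$.

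Next I would pass to sections over an arbitrary open $U\subset M$. As recalled in the discussion, an inverse limit of sheaves is computed sectionwise, so $(\varprojlim_k\cA/\cJ^k)(U)=\varprojlim_k(\cA/\cJ^k)(U)$, and the previous step gives $\cA(U)\simeq\varprojlim_k(\cA/\cJ^k)(U)$ through the canonical map. It then remains to identify $(\cA/\cJ^k)(U)$ with the genuine algebraic quotient $\cA(U)/\cJ(U)^k$, so that the limit runs along the $\cJ(U)$-adic filtration named in the statement.

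The hard part is exactly this last identification for a general, non-chart open $U$. One must verify that the presheaf $U\mapsto\cJ(U)^k$ is already a sheaf, so that $\cJ^k(U)=\cJ(U)^k$, and that the quotient presheaf $U\mapsto\cA(U)/\cJ(U)^k$ is already a sheaf, so that no sheafification intervenes and $(\cA/\cJ^k)(U)=\cA(U)/\cJ(U)^k$. Both facts rest on the existence of partitions of unity on $\cM$, as flagged before the statement: they let one patch local representatives and local relations, removing the gluing obstruction that would otherwise force a nontrivial sheafification. Granting this, the chain $\cA(U)\simeq\varprojlim_k(\cA/\cJ^k)(U)=\varprojlim_k\cA(U)/\cJ(U)^k$ via the canonical map is precisely Hausdorff-completeness of $\cA_M(U)$ for the $\cJ_M(U)$-adic topology, finishing the proof.
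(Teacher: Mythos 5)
Your proposal is correct and follows essentially the same route as the paper: both arguments identify $\cA$ with $\varprojlim_k\cA/\cJ^k$ by checking the isomorphism on chart domains via Equation (\ref{Completeness}), using that the quotient presheaves are sheaves thanks to partitions of unity and that inverse limits of sheaves are computed sectionwise. Your explicit attention to the identification $\cJ^k(U)=\cJ(U)^k$ for non-chart opens is a point the paper treats only implicitly, but it does not change the argument.
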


\subsection{Fundamental theorem of $\Z_2^n$-morphisms}

\begin{thm}\label{FundaTheoSuperm} If ${\cal M}=(M,\cA_M)$ is a $\Z_2^n$-supermanifold of dimension $p|\mathbf{q}\,$, $${\cal V}^{\,u|\mathbf{v}}=(V,\Ci_V[[\zh^1,\ldots,\zh^v]])$$ a $\Z_2^n$-superdomain of dimension $u|\mathbf{v}$, $V\subset\R^u$, $v=|\mathbf{v}|$, and if $(s^j,\zeta^b)$ is an $(u+v)$-tuple of homogeneous $\Z_2^n$-functions in $\cA_M(M)$ that have the same $\Z_2^n$-degrees as the coordinates $(y^j,\zh^b)$ in ${\cal V}^{\,u|\mathbf{v}}$ and satisfy $\lp  \ze s^{1}, \ldots, \ze s^{u}\rp(M)\subset V,$ then there exists a unique morphism of $\Z_2^n$-supermanifolds
$\Psi =(\psi, \psi^{*}) :  {\cal M} \raa {\cal V}^{u|\mathbf{v}},$ such that \be\label{PullCoordFunc} s^{j}= \psi^{*}_Vy^{j}\quad\text{and}\quad \zeta^{b}= \psi^{*}_V\zh^{b}\;.\ee\end{thm}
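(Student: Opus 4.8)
The plan is to build $\Psi=(\psi,\psi^*)$ by hand from the prescribed data, verify (\ref{PullCoordFunc}), and then read off uniqueness from the completeness and continuity already established.

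First I would pin down the base map. Applying the commutation relation of Proposition \ref{commepspsi} to the coordinate $y^j$ gives $\ze(\psi^*_V y^j)=(\ze_V y^j)\circ\psi=y^j\circ\psi$, the $j$-th component of $\psi$; since $\psi^*_V y^j=s^j$ is prescribed, this reads $y^j\circ\psi=\ze s^j$. Hence the only admissible base map is $\psi:=(\ze s^1,\ldots,\ze s^u)$, which is smooth because each $\ze s^j$ is a smooth function on $M$ for the manifold structure carried by the base, and which takes values in $V$ by the hypothesis $(\ze s^1,\ldots,\ze s^u)(M)\subset V$.

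Next I would construct the pullback. Writing $j^j:=s^j-\ze s^j\in\cJ_M(M)$, for an open $V'\subset V$ and $g=\sum_\zn g_\zn(y)\,\zh^\zn\in\Ci_V(V')[[\zh^1,\ldots,\zh^v]]$ I would set, exactly as in (\ref{Pullback})--(\ref{FormTayl}),
$$\psi^*_{V'}(g)=\sum_\zn\Big(\sum_\za\tfrac{1}{\za!}\,\big((\p^\za_y g_\zn)\circ\psi\big)\,j^\za\Big)\,\zeta^\zn,$$
the inner sum being the formal Taylor expansion of $g_\zn$ about $\ze s$. Carried out in a $\Z_2^n$-chart on $M$, the bookkeeping of Subsection \ref{MorphTheo} applies verbatim: every monomial in the chart's formal variables collects only finitely many contributions, so each coefficient is a finite sum of smooth functions and the right-hand side is a bona fide element of $\cA_M(\psi^{-1}(V'))$, the $\za$-series converging in the $\cJ$-adic topology by local completeness (Proposition \ref{CompletnessLoc}). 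Because $\deg s^j=0$ and $\deg\zeta^b=\deg\zh^b$, this map respects the $\Z_2^n$-degree; it is visibly a unital $\R$-algebra morphism compatible with restrictions, the chartwise definitions agreeing on overlaps because the substitutions $y^j\mapsto s^j$, $\zh^b\mapsto\zeta^b$ are intrinsic (global sections of $\cA_M$). Gluing yields a sheaf morphism $\psi^*$, and by Corollary \ref{Claim1} the pair $(\psi,\psi^*)$ automatically respects maximal ideals, so it is a morphism of locally $\Z_2^n$-ringed spaces satisfying (\ref{PullCoordFunc}) by construction.

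For uniqueness I would use that $g$ is the $\cJ_V$-adic limit of its polynomial truncations $g^{(k)}=\sum_{|\zn|\le k}g_\zn\zh^\zn$, since $g-g^{(k)}\in\cJ_V^{k+1}$. Any competing morphism $\Psi'$ obeying (\ref{PullCoordFunc}) has base map $\psi$ by the first step, and by Corollary \ref{Claim1} its pullback sends $\cJ_V^{k}$ into $\cJ_M^{k}$ and is therefore continuous; with the Hausdorff-completeness of $\cA_M$ (Proposition \ref{Hausdorff}) this forces $\psi'^*(g)=\lim_k\sum_{|\zn|\le k}\psi'^*(g_\zn)\,\zeta^\zn$. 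To see that $\psi'^*(g_\zn)$ is itself determined for a smooth coefficient $g_\zn$, I would iterate Hadamard's lemma: it writes $g_\zn$ as its Taylor polynomial about $\ze s$ of any order $N$ plus a remainder in the $N$-th power of the ideal generated by the $(y^j-\ze s^j)$; applying $\psi'^*$ and using $\psi'^*(y^j)-\ze\psi'^*(y^j)=j^j\in\cJ_M$ throws the remainder into $\cJ_M^{N}$, so completeness identifies the limit with the Taylor series of the previous step. Hence $\psi'^*=\psi^*$.

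The hard part is the handling of the smooth, generally non-polynomial coefficients $g_\zn$: both the convergence of the doubly-infinite Taylor series defining $\psi^*(g_\zn)$ and the Hadamard--Taylor argument forcing its value hinge on the $\cJ$-adic Hausdorff-completeness of Propositions \ref{CompletnessLoc} and \ref{Hausdorff} together with the finite-contribution estimate of Subsection \ref{MorphTheo}. A secondary nuisance is the consistency of the chartwise pullback on chart overlaps, which I expect to dispose of via the coordinate-free description of the substitution $y\mapsto s$, $\zh\mapsto\zeta$.
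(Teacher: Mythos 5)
Your overall architecture (base map forced by Proposition \ref{commepspsi}, existence via the formal Taylor pullback of Section \ref{MorphTheo}, uniqueness from continuity and completeness) matches the paper's, but the last step of your uniqueness argument has a genuine gap. You dispose of the smooth coefficients $g_\nu$ by claiming that Hadamard's lemma writes $g_\nu$ as a Taylor polynomial ``about $\ze s$'' plus a remainder which $\psi'^*$ sends into $\cJ_M^N$. The only Taylor--Hadamard identity to which the algebra morphism $\psi'^*$ can actually be applied is the expansion of $g_\nu$ about a \emph{fixed} point $y_0\in V$, namely $g_\nu=\sum_{|\za|<N}\frac{1}{\za!}(\p^\za g_\nu)(y_0)(y-y_0)^\za+\sum_{|\za|=N}h_\za\,(y-y_0)^\za$; applying $\psi'^*$ turns the remainder into $\sum_{|\za|=N}\psi'^*(h_\za)(s-y_0)^\za$, and $s^j-y_0^j=(\ze s^j-y_0^j)+j^j$ has the nonzero body $\ze s^j-y_0^j$. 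Hence that remainder lies only in $\frak{m}_m^N$ for $m\in\psi^{-1}(y_0)$, not in $\cJ_M^N$, and your appeal to $\cJ$-adic Hausdorff-completeness does not bite. The expansion ``about the moving point $\ze s$,'' whose remainder would indeed involve only $j^j\in\cJ_M$, is an identity in two groups of variables $(y,z)$ that would have to be evaluated at $(s,\ze s)$; giving meaning to that evaluation for an arbitrary competing morphism $\psi'^*$ is essentially an instance of the theorem being proved, so this step is circular as written.

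The paper repairs exactly this point by descending to stalks: Theorem \ref{Claim3} provides, for each $m$ and each $k$, a polynomial $P$ with $[g]_{\psi(m)}-[P]_{\psi(m)}\in\frak{m}_{\psi(m)}^{k}$; the $\frak{m}$-adic continuity of Corollary \ref{Claim1} then gives $[\psi_1^*g]_m-[\psi_2^*g]_m\in\frak{m}_m^k$ for all $k$; and since $\bigcap_k\frak{m}_m^k$ is \emph{not} zero (it contains germs flat at $m$), one must finally invoke the explicit description of $\frak{m}_m^k$ in Lemma \ref{Claim4} and let $m$ range over all of $M$ to force every coefficient of $\psi_1^*g-\psi_2^*g$ to vanish identically. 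These two ingredients (\ref{Claim3} and \ref{Claim4}) are precisely what is missing from your proposal, and without them uniqueness on non-polynomial sections is not established. Your first uniqueness step (reduction to the coefficients via $\cJ$-adic completeness) and your existence construction do follow the paper, modulo the chart-overlap consistency you flag, which is itself most cleanly handled by proving local uniqueness first.
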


Hence, $\Z_2^n$-morphisms whose target is a $\Z_2^n$-superdomain are completely defined by their values on the coordinate functions. This fundamental result makes $\Z_2^n$-Supergeometry a reasonable theory.\medskip

Roughly speaking, since $\psi^*_{V}$ is an algebra morphism, the data (\ref{PullCoordFunc}) uniquely determine the pullback $\psi^{*}_VP$ of any section $P\in \op{Pol}_V(V)[[\zh^1,\ldots,\zh^v]]$ with polynomial coefficients. Hence the quest for an appropriate polynomial approximation of an arbitrary section.

\begin{thm}\label{Claim3}
Let $m\in M$ be a base point of a $\Z_2^n$-supermanifold $(M,\cA_M)$ and let $f\in\cA(U)$ be a $\Z_2^n$-function defined in a neighborhood $U$ of $m$. For any fixed degree of approximation $k\in\N\setminus\{0\}$, there exists a polynomial $P=P(y,\zh)$ such that $$[f]_{m}-[P]_{m} \in \frak{m}_{m}^{k}\;.$$
\end{thm}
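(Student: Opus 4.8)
The plan is to transport the whole question into the local model of a centered chart and then reduce it, coefficient by coefficient, to the classical Taylor approximation of smooth functions, with the precise truncation orders read off from Lemma \ref{Claim4}.

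First I would choose a centered $\Z_2^n$-chart $(x,\xi)$ around $m$ over a convex open neighbourhood of the origin, exactly as in the discussion preceding Lemma \ref{Claim4}; then $m$ corresponds to $x=0$ and $f$ is represented by a formal power series $f(x,\xi)=\sum_{|\zm|\ge 0}f_\zm(x)\,\xi^\zm$ with smooth coefficients $f_\zm$. Applying Lemma \ref{Claim4} with its integer parameter set to $k-1$ (legitimate since $k\ge 1$) translates the target membership $[f]_m-[P]_m\in\frak m_m^{k}$ into a purely coefficientwise statement: writing $P=\sum_\zm P_\zm(x)\,\xi^\zm$, it holds if and only if $f_\zm-P_\zm$ vanishes to order at least $k-|\zm|$ at $x=0$ for every multiindex $\zm$ with $|\zm|\le k-1$, no condition at all being imposed when $|\zm|\ge k$.

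This dictates the construction. For each $\zm$ with $|\zm|\le k-1$ I would take $P_\zm$ to be the Taylor polynomial of $f_\zm$ at the origin truncated at degree $k-|\zm|-1$, and set $P_\zm=0$ for $|\zm|\ge k$; explicitly $$P(x,\xi)=\sum_{|\zm|\le k-1}\Big(\sum_{|\za|\le k-|\zm|-1}\tfrac{1}{\za!}(\p_x^\za f_\zm)(0)\,x^\za\Big)\xi^\zm.$$ The first thing to check is that $P$ is genuinely a polynomial and not merely a truncated power series: the outer sum ranges over the finitely many monomials $\xi^\zm$ of total $\xi$-degree at most $k-1$, and each coefficient is a polynomial in $x$ of bounded degree, so $P$ is polynomial in both $x$ and $\xi$, as the statement requires.

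It then remains to verify the coefficientwise condition. For $|\zm|\ge k$ there is nothing to prove. For $|\zm|\le k-1$ the Taylor formula with integral remainder on the convex chart domain gives $f_\zm-P_\zm=\sum_{|\za|=k-|\zm|}x^\za R_\za(x)$ with smooth $R_\za$, so $f_\zm-P_\zm$ vanishes to order at least $k-|\zm|$ at $x=0$; by the reformulation above this is exactly $[f]_m-[P]_m\in\frak m_m^{k}$. I expect the only genuinely structural point to be the first one, namely the passage from the abstract filtration power $\frak m_m^{k}$ to the concrete vanishing orders of the coefficients, which is precisely what Lemma \ref{Claim4} supplies; once it is available the argument is classical Taylor approximation performed slot by slot, the sole bookkeeping subtlety being to match the truncation degree $k-|\zm|-1$ to the exponent $k-|\zm|$ produced by the lemma.
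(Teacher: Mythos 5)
Your proposal is correct and follows exactly the route the paper indicates: pass to a centered chart over a convex domain, use Lemma \ref{Claim4} (with parameter $k-1$) to turn membership in $\frak{m}_m^{k}$ into vanishing of each coefficient $f_\zm-P_\zm$ to order $k-|\zm|$ for $|\zm|\le k-1$, and achieve this by Taylor-truncating each $f_\zm$ at degree $k-|\zm|-1$ with the integral (Hadamard) remainder. The paper only records this as a one-line remark, so your write-up is simply a fully detailed version of the same argument.
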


In this statement, the polynomial $P$ depends on $m,$ $f$, and $k$, and the variables $(y,\zh)$ are (pullbacks of) coordinates centered at $m$. It is a direct consequence of a Taylor expansion at $m=y=0$ of the coefficients of $f=f(y,\zh)$.

\begin{proof}[Sketch of proof of Theorem \ref{FundaTheoSuperm}] Since the existence of an extending $\Z_2^n$-morphism can be reduced in the main to the construction described in Section \ref{MorphTheo}, we focus here on its uniqueness.

Let $\Psi_1=(\psi_{1},\psi_{1}^{*})$ and $\Psi_2=(\psi_{2},\psi_{2}^{*})$ be two $\Z_2^n$-morphisms defined by the same $(s^j,\zeta^b)$. First, the commutation property (\ref{ComMorpBaseProj}) allows to see that $\psi_1=\psi_2$. As for $\psi_1^*$ and $\psi_2^*$, one shows (rigorously) that they coincide on polynomial sections (using the continuity \ref{Claim1} of the pullbacks of sections with respect to the $\cJ$-adic topology, as well as the Hausdorff-completeness \ref{Hausdorff} of the $\Z_2^n$-function algebras), then one proves that they coincide on arbitrary sections (using the polynomial approximation \ref{Claim3}, the continuity \ref{Claim1} of the pullbacks of germs with respect to the ${\frak m}$-adic topology, as well as the description \ref{Claim4} of the zero neighborhoods of the latter).\end{proof}

\section{Concluding remarks}
The idea of considering supercommutation rules not reduced to `bosons' and `fermions', but including \emph{a priori} arbitrary statistics for commutation of elements of finite multi-particle states, is not new, but has not been exploited very extensively in the literature. In particular, some attempts to develop a theory of \emph{color supermanifolds} are generally suffering from a lack of interesting examples.\medskip

In this paper we proposed a well-founded geometric theory of this kind with a clear
local model. Our theory is not just a trivial extension of that of supermanifolds. The existence of even formal generators leads to superalgebras containing a Grassmann part, as well as a part consisting of formal power series. Also the corresponding Berezinian determinant and volume are far from being obvious concepts.\medskip

On the other hand, our formalism opens new geometric perspectives and has potential applications.
One can consider $\Z_2^n$-graded Poisson brackets and the corresponding deformation quantization.
Since Clifford algebras can be viewed as deformations of Grassmann algebras induced by an odd Poisson bracket, this includes `higher' or `color' Clifford algebras, as well as `color' Moyal brackets. More precisely, our \emph{color Clifford algebra} is generated as the free (tensor) algebra of a $\Z_2^n$-graded vector space $S=\oplus_{a\in\Z_2^n}S_a$ modulo the Clifford relations $$f_a f_b-(-1)^{\la{a},{b}\ran}f_bf_a=h_{ab}\cdot 1\,,$$ where $h:S\otimes S\to\mathbb{F}$ is
a (non-degenerate) $\Z_2^n$-antisymmetric bilinear form on $\zP S$ which gives rise to a $\Z_2^n$-graded Poisson bracket on $C^\infty(\zP S^*)$. % so $h:\operatorname{Sym}^2(\zP S)\to\mathbb{F}$.
There is an analog of the Moyal formula for the corresponding associative product. Also a generalization of super Lie theory to $\Z_2^n$-super (color) Lie algebras and Lie groups is rather straightforward (cf. \cite{Sche79,KS12}).\medskip

Of course, a theory of superfields defined on $\Z_2^n$-supermanifolds requires a theory of $\Z_2^n$-superintegration, which is still in process of construction, but $\sigma$-models with values in our `superization' $\zP S$, for a $\Z_2^n$-graded vector bundle $S$ concentrated in nonzero degrees, can be directly constructed \cite{Schw93}.
In particular, a `higher' Minkowski space can be defined as the color Lie group $M$ associated with the color Lie algebra $\mathcal{L}=V\ti\zP S^*$, where $V$ is the central part corresponding to the vector part of the Minkowski space $\check M$ and the color Lie bracket is determined by a `color-antisymmetric' morphism $\zG:\zP S\otimes \zP S\to V$. In coordinates,
$$[f_a,f_b]=-2\zG^\zm_{ab}e_\zm,$$
where $\zG^\zm_{ab}=-(-1)^{\la a,b\ran}\zG^\zm_{ba}$.
The form $\tilde \zG:\zP S^*\otimes \zP S^*\to V$ defined from the Clifford relations determines a $\Z_2^n$-symmetric bilinear form on spinor fields $\psi: \check M\to \zP S$, which takes the Dirac form
$$\psi\Dc\psi=\tilde\zG^\zm(\psi,\partial_\zm\psi)=\tilde\zG^{\zm ab}\psi_a\partial_\zm\psi_b\,.$$
It is not clear whether super-Lagrangians using $\psi\Dc\psi$ can have a sound physical meaning, but this is a general problem even for the whole string theory.\medskip

Particularly interesting could be extension of $\op{BV}$ and $\op{AKSZ}$-formalisms \cite{AKSZ97, Schw96} and general concepts like \emph{$Q$-manifolds} or \emph{Courant brackets} to $\Z^n$-gradings and $\Z_2^n$-parities. Lie groupoid or Lie algebroid structures, compatible with multi-gradations and induced multi-parities in the spirit of \cite{BGG14,BGG16},
can also provide a fruitful frame for this new supergeometry.\medskip

Note that canonical symplectic structures on iterated bundles like $\T^*\T M$ are multi-homogeneous (see \cite{GR09}), so one can develop the concept of Courant brackets as homological multi-homogeneous Hamiltonians on such $\Z_2^n$-supermanifolds (see \cite{Roy02}).\medskip

Observe eventually that the appearance of canonical superizations of $n$-fold vector bundles as canonical models for $\Z_2^n$-supergeometry, which is a variant of the Batchelor-Gawedzki theorem in standard supergeometry, corresponds well with the fact that classical mechanics has recently been recognized as inextricably associated with some double vector bundle structures and their morphisms (e.g. $\ze:\sT^*\sT M\to \sT\sT^*M$) \cite{GGU06,GG08,GG11,Tul77}.

All this shows that extending possible gradations is not merely a formal trick, but it offers a new examination of known concepts and opens new geometric perspectives.

\section{Acknowledgements}

The research of T. Covolo (resp., J. Grabowski, N. Poncin) was founded by the Luxembourgian NRF grant 2010-1, 786207 (resp., the Polish National Science Centre grant DEC-2012/06/A/ST1/00256, the University of Luxembourg grant GeoAlgPhys 2011-2014). The authors are grateful to S. Morier-Genoud and V. Ovsienko (resp., D. Leites, P. Schapira), whose work was the starting point of this paper (resp., for his suggestions and valuable support, for explanations on sheaf-theoretic aspects).

\end{document}